\numberwithin{equation}{section}
\newtheorem{theorem}{Theorem}
\numberwithin{theorem}{section}
\newtheorem{cor}[theorem]{Corollary}
\newtheorem*{thm a}{Theorem A}
\newtheorem*{thm b}{Theorem B}
\newtheorem*{thm c}{Theorem C}
\begin{document}

\title{ Invariance under bounded analytic functions}
\author{ Ajay Kumar}
\address{Department of Mathematics, University of Delhi, Delhi (India) 110007}
\email{nbkdev@gmail.com}
\thanks{The research of the first author is supported by the Junior Research Fellowship of the Council of Scientific and Industrial Research, India (Grant no. 09/045(1232)/ 2012-EMR-I)}

\author{Niteesh Sahni}
\address{Department of Mathematics, Shiv Nadar University, Dadri, Uttar Pradesh (India) 201314}
\email{niteeshsahni@gmail.com}
\thanks{}

\author{Dinesh Singh}
\address{Department of Mathematics, University of Delhi, Delhi (India) 110007}
\email{dineshsingh1@gmail.com}

\subjclass[2010]{Primary 47B37; Secondary 47A25}


\dedicatory{}


\keywords{invariant subspace, inner function, uniform algebra, compact abelian group, multiplier algebra of $BMOA$}

\begin{abstract}
In a recent paper, M. Raghupathi has extended the famous theorem of Beurling to the context of subspaces that are invariant under the class of subalgebras of $H^\infty$ of the form $IH^\infty$, where $I$ is an inner function. In this paper, we provide analouges of the above mentioned $IH^\infty$ related extension of Beurling's theorem to the context of uniform algebras, on compact abelian groups with ordered duals, the Lebesgue space on the real line and in the setting of the space $BMOA$. We also provide a significant simplification of the proof  of the Beurling's theorem in the setting of uniform algebras and a new proof of the Helson-Lowdenslager theorem that generalizes Beurling's theorem in the context of compact abelian groups with ordered duals.
\end{abstract}

\maketitle
\tableofcontents

\section{Introduction and statement of main theorem (Theorem C)} \label{intro}
The results carried in this article stem from the famous and fundamental theorem of Beurling, \cite{brlg}, related to the characterization of the invariant subspaces of the operator of multiplication by the coordinate function $z$-also known as the shift operator-on the classical Hardy space $H^{2}$ of the open unit disk. This invariance is also equivalent to invariance under multiplication by each element of the Banach algebra $H^{\infty }$ of bounded analytic functions on the disk, see \cite[Lemma, p. 106]{hoffman}. The impetus for this article is the recent extension (on the open disk) of Beurling's theorem to the problem of characterizing invariant subspaces on $H^{2}$ where the invariance is under the context of multiplication by each element of the subalgebra $IH^{\infty }$ of $H^{\infty }$ where $I$ is any inner function i.e. $I$ has absolute value $1$ almost everywhere on the boundary $\mathbb{T}$ of the open unit disk. Such an extension has had important applications to interpolation problems and related issues for which we refer to \cite{balnman}, \cite{brnski}, \cite{drtnpkrg}, \cite{hamnrgp}, \cite{jobonho}, \cite{ juknnmc} and \cite{knese}. 

Our principal objective in this paper is to prove versions of the above mentioned extension of Beurling's theorem in the setting of the Hardy spaces on uniform algebras, on compact abelian groups, on the real line and in the context of $BMOA.$ Along the way we first present a new, much simplified and elementary proof of Beurling's theorem on uniform algebras \cite[p. 131]{twgm}. We do this by eliminating, in the context of the Hardy spaces of uniform algebras, the use of a deep result of Kolmogoroff's on the weak 1-1 nature of the conjugation operator and also by eliminating the complicated technicalities of uniform integrability. Later on, in section \ref{CAG}, we also present a new proof of the Helson-Lowdenslager version of Beurling's theorem on compact abelian groups \cite{HCAG}. 

With the purpose of making things clearer we state below Beurling's theorem on the open unit disk and two other connected theorems that are relevant to the rest of this paper. All three theorems below are in the setting of the Hardy spaces of the open unit disk. At appropriate places we shall state the relevant versions of these theorems in the context of various Hardy spaces (mentioned above) and on $BMOA$. Our key objective is to show in the rest of the paper that the Theorem C below has valid versions in various Hardy spaces and on BMOA. It is this theorem that has been proved important in interpolation problems of the open disk.

Let $\mathbb{D}$ denote the open unit disk and let $\mathbb{T}$ be the unit circle in the complex plane $\mathbb{C}$. We use $H^{p}(\mathbb{D})$, $1\leq p<\infty $ to denote the classical Hardy space of analytic functions inside the unit disk $\mathbb{D}$ and $H^{\infty }(\mathbb{D})$ is the space of bounded analytic functions on $\mathbb{D}$. For $1\leq p \leq \infty $, $L^{p}$ denotes the Lebesgue space on the unit circle $\mathbb{T}$ and $H^{p}$ stands for the closed subspace of $L^{p}$ which consists of the functions in $L^{p}$ whose Fourier coefficients for the negative indices are zero. Due to the fact that there is an isometric isomorphism between $H^{p}(\mathbb{D)}$ and $H^{p}$, on certain occasions we will identify $H^{p}(\mathbb{D})$ with $H^{p}$ without comment (see \cite{hoffman}).

The shift operator $S$ on the Hardy space $H^2$, as mentioned above, is defined as $(Sf)(z) = zf(z)$, for all $z \in \mathbb{T}$ and all $f$ in $H^2$. The same definition extends to all Hardy spaces and $S$ is an isometry on all of them. In fact, the operator $S$ is well defined on the larger Lebesgue spaces $L^p$ of which the Hardy spaces are closed subspaces and it is an isometry here as well. The space $L^2$ is a Hilbert space under the inner product 
\[
\langle f, g \rangle = \int \limits^{}_{\mathbb{T}}f(z)\overline{g(z)}dm
\]
where $dm$ is the normalized Lebesgue measure. A proper non-trivial closed subspace $\mathcal{M}$ of a Banach space $X$ is said to be invariant under a bounded linear transformation (operator) $T$ acting on $X$ if $T(\mathcal{M}) \subseteq \mathcal{M}$. Invariant subspaces and their characterization play an import role in operator theory and have numerous applications.

\textit{Note:} All further necessary terminology and notation are given within the relevant sections that shall follow. Throughout the text, $clos_{p}$ stands for the closure in $p$-norm (weak-star when $p=\infty $) and $[$ . $]$ for the $span$.

\begin{thm a}[Beurling's Theorem, \cite{brlg}]
\label{thm A} Every non-trivial shift invariant subspace of $H^2$ has the form $\phi H^2$, where $\phi$ is an inner function.
\end{thm a}

\begin{thm b}(Equivalent version of Beurling's Theorem, \cite[Lemma, p. 106]{hoffman}). \label{thm B}
A closed subspace of $H^{2}$ is shift-invariant iff it is invariant under multiplication by every bounded analytic function in $H^{\infty}$.
\end{thm b}

\begin{thm c}(Extension of Beurling's Theorem, \cite[Theorem 3.1]{mrgh}). \label{thm C}
Let $I$ be an inner function and let $\mathcal{M}$ be a subspace of $L^{p}$, $1 \leq p \leq \infty$ that is invariant under $IH^{\infty }$. Either there exists a measurable set $E$ such that $\mathcal{M}=\chi _{E}L^{p}$ or there
exists a unimodular function $\phi $ such that $\phi IH^{p}\subseteq \mathcal{M}\subseteq \phi H^{p}$. In particular, if $p=2$, then there exists a subspace $W\subseteq H^{2}\ominus IH^{2}$ such that $\mathcal{M}=\phi (W\oplus IH^{2})$.
\end{thm c}

\section{A brief preview}\label{preview}
In section \ref{ua}, we present a simplification of the proof of Beurling's theorem and an analouge of Theorem C in the setting of uniform algebras. In section \ref{CAG}, we produce a new and simple proof of
the Helson-Lowdenslager analogue of Beurling's theorem and a version of Theorem C on compact abelian groups with ordered duals. Section \ref{L2R} describes an avatar of Theorem C for the Lebesgue space of the real line. In section \ref{BMOA}, we present an analouge of Theorem C in the setting of
the space $BMOA$.

\section{Theorem C in the setting of uniform algebras}\label{ua}
Let $X$ be a compact Hausdorff space and let $A$ be a uniform algebra in $C(X)$, the algebra of complex valued continuous functions on $X$. Here, by a uniform algebra we mean a closed subalgebra of $C(X)$ which
contains the constant functions and separates the points of $X$, i.e. for any $x, y \in X$,  $x \neq y$, $\exists$ a function $f \in A$ such that $f(x) \neq f(y)$. For a multiplicative linear functional $\varphi $ in the maximal ideal space of $A$, a \emph{representing measure} $m$ for $\varphi $ is a positive measure on $X$ such that $\varphi (f)=\int fdm$, for all $f\in A$. We shall denote the set of all \emph{representing measures} for $\varphi $ by $M_{\varphi }$. Let $W$ be a convex subset of a vector space $V$, an element $x\in W$ is said to be a 
\emph{core point} of $W$ if whenever $y\in V$ such that $x+y\in W$, then for every sufficiently small $\epsilon >0$, $x-\epsilon y\in W$. A \emph{core measure} for $\varphi $ is a measure which is a \emph{core point} of $M_{\varphi }$.

For $1\leq p<\infty $, $L^{p}(dm)$ is the space of functions whose $p$-th power in absolute value is integrable with respect to the representing measure $m$ and $H^{p}(dm)$ is defined to be the closure of $A$ in $L^{p}(dm) $. $L^{\infty }(dm)$ is the space of $m$-essentially bounded functions and $H^{\infty }(dm)$ is the weak-star closure of $A$ in $L^{\infty }(dm)$. Let $A_{0}$ be the subalgebra of $A$ such that $\int fdm=0$, for all $f\in A$. $H_{0}^{p}(dm)$ is the closure of $A_{0}$ in $L^{p}(dm)$. The real annihilator of $A$ in $L_{R}^{p}$, $1\leq p\leq \infty $, is the space $N^{p} $ which consists of functions $w$ in $L_{R}^{p}$ such that $\int wfdm=0$, for all $f\in A$. The conjugate function of a function $f$ in $ReH^{2}(dm)$ is the function $f^{\ast }$ in $ReH_{0}^{2}(dm)$ such that $f+if^{\ast }\in H^{2}(dm)$. The conjugation operator is the real linear operator which sends $f$ to $f^{\ast }$.

We call a function $I$ in $H^{\infty }(dm)$ $inner$ if $|I|=1$ $m$-almost everywhere. A subspace $\mathcal{M}$ of $L^{p}(dm)$ is said to be \emph{invariant} under $A$ if $A\mathcal{M}\subseteq \mathcal{M}$ or equivalently $A_{0}\mathcal{M}\subseteq \mathcal{M}$. We say $\mathcal{M}$ is \emph{simply invariant} if $A_{0}\mathcal{M}$ is not dense in $\mathcal{M}$. We refer to \cite{twgm} for more details.

Our purpose in the theorem given below is to demonstrate that the Theorem 6.1 in \cite{twgm}, which is the key result that essentially characterizes the invariant subspaces on uniform algebras, can actually be proved without the use of Kolmogoroff's theorem on the $\left( L^{p},L^{1}\right) $ boundedness of the conjugation operator $\left( 0<p<1\right) $ as defined above on uniform algebras and used in \cite{twgm} for observing convergence in measure for the conjugate of a sequence of $L^{1}$ functions. We also
eliminate the use of uniform integrability.

\begin{theorem}\label{gm61}
Suppose the set of representing measures for $\varphi $ is finite dimensional and $m$ is a core measure for $\varphi $. Then there is a 1-1 correspondence between invariant subspaces $\mathcal{M}_{p}$ of $L^{p}\left( m\right) $ and closed (weak star closed if $q=\infty $) invariant subspaces $\mathcal{M}_{q}$ of $L^{q}\left( m\right) $ such that $\mathcal{M}_{q}=\mathcal{M}_{p}\cap L^{q}(dm)$ and $\mathcal{M}_{p}$ is the closure in $L^{p}(dm)$ of $\mathcal{M}_{q}$, $\left( 0<p<q\leq \infty \right) $.
\end{theorem}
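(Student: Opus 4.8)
The plan is to realize the correspondence through two explicit, mutually inverse maps and to reduce the entire statement to a single existence device for outer multipliers. For a closed (weak-star closed when $q=\infty$) invariant subspace $\mathcal{M}_{q}$ of $L^{q}(dm)$ set $\Phi(\mathcal{M}_{q})=clos_{p}\mathcal{M}_{q}$, and for an invariant subspace $\mathcal{M}_{p}$ of $L^{p}(dm)$ set $\Psi(\mathcal{M}_{p})=\mathcal{M}_{p}\cap L^{q}(dm)$. First I would record that both maps are well defined: multiplication by a fixed $a\in A$ is continuous on $L^{p}(dm)$, so the $p$-closure of an $A$-invariant set is again a closed $A$-invariant subspace, whence $\Phi$ lands in the right family; and $\mathcal{M}_{p}\cap L^{q}(dm)$ is visibly an $A$-invariant subspace, closed (weak-star closed when $q=\infty$) in its own topology. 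As the inclusions $\mathcal{M}_{q}\subseteq clos_{p}\mathcal{M}_{q}$ and $\mathcal{M}_{p}\cap L^{q}(dm)\subseteq\mathcal{M}_{p}$ are automatic, the theorem collapses to the two identities $clos_{p}(\mathcal{M}_{q})\cap L^{q}(dm)=\mathcal{M}_{q}$ (\emph{recovery}) and $clos_{p}(\mathcal{M}_{p}\cap L^{q}(dm))=\mathcal{M}_{p}$ (\emph{density}).

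The engine for both is a family of outer multipliers. Given $h\neq0$ in the ambient $L^{q}(dm)$ (or in $L^{1}(dm)$), I want $u_{n}\in H^{\infty}(dm)$ with $|u_{n}|=\min(1,n/|h|)$, normalized so that $u_{n}\to1$ almost everywhere; observe that $\log|u_{n}|=-(\log|h|-\log n)^{+}$ is automatically in $L^{1}(dm)$ since $(\log|h|)^{+}$ is integrable. Because $\mathcal{M}_{p}$ (respectively $\mathcal{M}_{q}$) is $A$-invariant, passing to $p$-limits (respectively weak-star limits) shows it is invariant under the weak-star closure $H^{\infty}(dm)$. With this in hand the density identity is immediate: for $f\in\mathcal{M}_{p}$ the product $u_{n}f$ lies in $\mathcal{M}_{p}$, while $|u_{n}f|=\min(|f|,n)\le n$ forces $u_{n}f\in L^{q}(dm)$, so $u_{n}f\in\mathcal{M}_{p}\cap L^{q}(dm)$; and $|u_{n}f|\le|f|\in L^{p}(dm)$ with $u_{n}f\to f$ pointwise gives $u_{n}f\to f$ in $L^{p}(dm)$ by dominated convergence. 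This single estimate replaces the uniform-integrability step of the classical treatment.

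For the recovery identity the inclusion $\supseteq$ is trivial. For $\subseteq$ I would first reduce to bounded elements: if $g\in clos_{p}(\mathcal{M}_{q})\cap L^{q}(dm)$ then applying the multipliers to $g$ yields $u_{m}g\in clos_{p}(\mathcal{M}_{q})\cap L^{\infty}(dm)$ with $u_{m}g\to g$ in $L^{q}(dm)$ (dominated convergence, $q<\infty$; the case $q=\infty$ needs no reduction), so since $\mathcal{M}_{q}$ is $q$-closed it suffices to show that every bounded $w\in clos_{p}(\mathcal{M}_{q})$ already lies in $\mathcal{M}_{q}$. For this I pass to $\mathcal{M}_{\infty}:=\mathcal{M}_{q}\cap L^{\infty}(dm)$ and argue by $L^{\infty}$--$L^{1}$ duality, which is legitimate because the predual of $L^{\infty}(dm)$ is genuine: if $w\notin\mathcal{M}_{\infty}$, pick $\psi\in L^{1}(dm)$ annihilating $\mathcal{M}_{\infty}$ with $\int w\psi\,dm\neq0$, and apply the multipliers to $\psi$. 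Then $u_{n}\psi\in L^{\infty}(dm)$ still annihilates $\mathcal{M}_{\infty}$ (since $hu_{n}\in\mathcal{M}_{\infty}$ for $h\in\mathcal{M}_{\infty}$), hence annihilates $w$, so $\int w\,u_{n}\psi\,dm=0$ for every $n$; as $|w\,u_{n}\psi|\le\|w\|_{\infty}|\psi|\in L^{1}(dm)$ and $u_{n}\to1$, dominated convergence gives $\int w\psi\,dm=0$, a contradiction. Thus $w\in\mathcal{M}_{\infty}\subseteq\mathcal{M}_{q}$, completing the recovery.

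The genuine labour, and the place where the hypotheses do their work, is isolated entirely in the multiplier device: the existence inside $H^{\infty}(dm)$ of outer functions with the prescribed modulus $\min(1,n/|h|)$ \emph{and}, crucially, the normalized convergence $u_{n}\to1$. Writing $u_{n}$ explicitly realizes $\log|u_{n}|\in L^{1}(dm)$ as the real part of an $H^{\infty}(dm)$ function, so the phase of $u_{n}$ is the conjugate of $\log|u_{n}|$; in the classical route the convergence $u_{n}\to1$ is precisely the control of these conjugates obtained from Kolmogoroff's weak-type $(L^{p},L^{1})$ bound for $0<p<1$. The point of the present argument is that this quantitative input is unnecessary: because $m$ is a core measure for $\varphi$ and $M_{\varphi}$ is finite dimensional, the abstract Szeg\H{o}--Jensen inequality supplies $\log|h|\in L^{1}(dm)$ for nonzero $h\in H^{p}(dm)$, and the outer function of a given admissible modulus exists in $H^{\infty}(dm)$ as a purely qualitative consequence of the core-measure structure, the finite dimensionality of $M_{\varphi}$ reducing the solvability of the modulus equation (and the attendant phase convergence) to an elementary finite linear correction rather than an $L^{p}$ bound on conjugation. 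I expect essentially all of the difficulty to lie in this qualitative existence-and-normalization lemma, after which the correspondence follows formally from the two dominated-convergence arguments above.
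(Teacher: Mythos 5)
Your global architecture is the same as the paper's: reduce the correspondence to a density identity and a recovery identity, prove density by multiplying $f$ by bounded analytic damping factors tending to $1$, and prove recovery by an $L^{1}$--$L^{\infty}$ duality argument in which the annihilating functional is itself tamed by such multipliers (the paper's proof of Theorem \ref{gm61} does exactly this, invoking Krein--Smulian for the weak-star closedness of $\mathcal{M}_{p}\cap L^{\infty}(dm)$, a point you should also record, since your Hahn--Banach separation needs $\mathcal{M}_{\infty}$ weak-star closed). But everything in your proposal funnels into the deferred ``existence-and-normalization lemma,'' and that is precisely where the argument fails. Your multipliers have moduli $\min(1,n/|h|)$ that vary nonlinearly with $n$: writing $u_{n}=\exp\left(k_{n}+ik_{n}^{\ast}\right)$ with $k_{n}=-(\log|h|-\log n)^{+}$, you do get $k_{n}\to 0$ a.e.\ and in $L^{1}(dm)$, but the normalization $u_{n}\to 1$ a.e.\ requires a.e.\ (or in-measure) convergence of the conjugates $k_{n}^{\ast}$ of a varying $L^{1}$-null sequence --- and that is exactly Kolmogoroff's weak-type $(L^{p},L^{1})$ bound on conjugation, the very ingredient this theorem's proof is advertised as eliminating. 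The finite dimensionality of $M_{\varphi}$ does not rescue you in the way you hope: in the uniform algebra setting $\log|u|$ can be prescribed only modulo the finite-dimensional annihilator $N$ (so your ``finite linear correction'' addresses the \emph{modulus} obstruction, replacing $|u_{n}|$ by $\min(1,n/|h|)e^{P_{n}}$ with $P_{n}\in N=N^{\infty}$ bounded, which is harmless), but it says nothing about the \emph{phase} convergence $k_{n}^{\ast}\to 0$, which remains unproved and is the whole difficulty.

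The paper's device sidesteps this entirely by scaling a \emph{single} exponent rather than truncating: take $h=|f|^{p/2}-P\left(|f|^{p/2}\right)$, where $P$ is the $L^{2}_{R}$-projection onto the finite-dimensional space $N^{2}=N^{1}=N^{\infty}$ of \emph{bounded} functions (this is where the core-measure and finite-dimensionality hypotheses act), so that $h\in Re\,H_{0}^{2}(dm)$ and $h+ih^{\ast}\in H^{2}(dm)$; then set $h_{n}=\exp\left(-\left(h+ih^{\ast}\right)/n\right)$. Since the exponent is $1/n$ times one fixed function, $h_{n}\to 1$ pointwise a.e.\ trivially --- no conjugation estimate and no uniform integrability --- while $|h_{n}|\leq e^{\Vert P(|f|^{p/2})\Vert_{\infty}/n}$ is bounded and $|h_{n}f|\leq e^{\Vert P(|f|^{p/2})\Vert_{\infty}}\sup_{t\geq 0}te^{-t^{p/2}/n}<\infty$, which is all the density step needs (note you never needed the exact truncated modulus $\min(|f|,n)$, only that $h_{n}f$ be bounded and dominated by a fixed $L^{p}$ function); the same one-function trick, applied to $|g|^{1/2}$, bounds the $L^{1}$ annihilator in the recovery step. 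If you replace your truncation family by this construction, the rest of your formal scheme goes through and becomes essentially the paper's proof; as written, however, your key lemma is unestablished and, in the form stated, would reintroduce the quantitative conjugation input you set out to avoid.
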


\begin{proof}
It is enough to consider the case $q=\infty$ since the other values of $q$ will have an identical proof. Let $\mathcal{M}_p$ be an invariant subspace of $L^{p}(dm)$. Put $M=\mathcal{M}_p\cap L^{\infty }(dm)$. By the Krein-Schmulian criterion, $M$ is weak-star closed. We show $\overline{M}$ (in $L^{p}(dm)$) is equal to $\mathcal{M}_p$. Let us choose any $f\in \mathcal{M}_p$. Let $P\left( \left\vert f\right\vert ^{\frac{p}{2}}\right) $ be the projection in $L_{R}^{2}$ of $\left\vert f\right\vert ^{\frac{p}{2}}$ onto $N^{2}$ (note $N^{1}=N^{\infty }=N^{2}$, see \cite[p. 109]{twgm}). So $h=\left\vert f\right\vert ^{\frac{p}{2}}-P\left( \left\vert f\right\vert^{\frac{p}{2}}\right) \in Re H_{0}^{2}$. Let $h_{n}=\exp \left( -\dfrac{(h + i h^{\ast })}{n}\right) $ where $\ast $ denotes the conjugation operator. Then $h_{n}\in H^{\infty }(dm)$ and $h_{n}f\in M\cap L^{\infty }(dm)$. Further, $h_{n}f\rightarrow f$ in $L^{p}(dm)$ since $h_{n}\rightarrow 1$ boundedly and pointwise. This proves $\mathcal{M}_p\cap L^{\infty }(dm)$ is dense in $\mathcal{M}_p$.

Now suppose that $M$ is a weak-star closed invariant subspace of $L^{\infty }(dm)$ and let $\mathcal{M}_p$ be the closure of $M$ in $L^{p}(dm)$. We must show that $\mathcal{M}_p\cap L^{\infty }(dm)=M$. Clearly $\mathcal{M}_p\cap L^{\infty }(dm)$ is weak-star closed in $L^{\infty }(dm)$. Assume that $M\varsubsetneq \mathcal{M}_p\cap L^{\infty }(dm)$. Then there exists $g\in ~^{\perp }M$ such that $g\notin ~^{\perp }\mathcal{M}_p\cap L^{\infty }(dm)$ ($g\in L^{1}(dm)$). We may assume without loss of generality that $g\in L^{\infty }(dm)$. This can be done by considering $g\exp \left( \dfrac{-\left( \left\vert g\right\vert^{\frac{1}{2}}+P\left( \left\vert g\right\vert ^{\frac{1}{2}}\right) \right) - i \left( \left\vert g\right\vert ^{\frac{1}{2}}+P\left(\left\vert g\right\vert ^{\frac{1}{2}}\right) \right) ^{\ast }}{n}\right) $. Then for each $f\in \mathcal{M}_p$, there is a sequence $\left( f_{n}\right) \subset M$ such that
\begin{equation*}
\int\limits_{X}gf_{n}dm \rightarrow \int\limits_{X}gfdm.
\end{equation*}
But $\int\limits_{X}gf_{n}dm=0$ so that $\int\limits_{X}gfdm=0$. This contradiction implies that $\mathcal{M}_p\cap L^{\infty }(dm)=M$.

\end{proof}

Let $I$ be an inner function in $H^{\infty }(dm)$, then $IH^{\infty }(dm)$ is a subalgebra of $H^{\infty }(dm)$. The following theorem is the version of Theorem C in the setting of uniform algebras i.e. we characterize the subspaces of $L^{p}(dm)$, $1\leq p\leq \infty $, which are invariant under $IH^{\infty }(dm)$.

\begin{theorem}\label{unal}
Let $I$ be an inner function and $\mathcal{M}$ be a subspace of $L^p(dm)$, $1\leq p \leq \infty$, invariant under $IH^{\infty}(dm)$ such that $\int\limits^{}_{X} f dm \neq 0$, for some $f$ in $\mathcal{M}$, then 
\[
I.qH^p(dm) \subseteq \mathcal{M} \subseteq qH^p(dm)
\]
where $q$ is a $m$-measurable function such that $|q|=1$ $m$-a.e. When $p=2$, $$\mathcal{M}=q \left(W \oplus IH^p(dm)\right)$$ for some subspace $W$ of $H^2(dm)$.
\end{theorem}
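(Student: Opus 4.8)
The plan is to reduce the statement to the Helson--Lowdenslager/Gamelin characterization of $A$-invariant subspaces of $L^{p}(dm)$ (the uniform-algebra Beurling theorem, \cite{twgm}, whose transfer across the $L^{p}$-scale is furnished by Theorem \ref{gm61}), by enlarging $\mathcal{M}$ to the $A$-invariant subspace it generates. Set $\mathcal{K}=clos_{p}[H^{\infty}(dm)\,\mathcal{M}]$, the smallest closed (weak-star closed when $p=\infty$) subspace of $L^{p}(dm)$ that contains $\mathcal{M}$ and is invariant under all of $H^{\infty}(dm)$, so that $\mathcal{M}\subseteq\mathcal{K}$ trivially. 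The first thing I would record is the sandwich $I\mathcal{K}\subseteq\mathcal{M}\subseteq\mathcal{K}$. The right inclusion is immediate. For the left one, multiplication by the unimodular $I$ is an isometry of $L^{p}(dm)$ (a weak-star homeomorphism when $p=\infty$), hence commutes with the relevant closure: $I\mathcal{K}=I\,clos_{p}[H^{\infty}(dm)\mathcal{M}]=clos_{p}[\,(IH^{\infty}(dm))\mathcal{M}\,]$; since $\mathcal{M}$ is invariant under $IH^{\infty}(dm)$ and closed, this closure lies inside $\mathcal{M}$, giving $I\mathcal{K}\subseteq\mathcal{M}$.

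The heart of the argument, and the step I expect to be the main obstacle, is to show that $\mathcal{K}$ is \emph{simply} invariant, i.e. that $A_{0}\mathcal{K}$ is not dense in $\mathcal{K}$. Once this is known, the uniform-algebra Beurling theorem forces $\mathcal{K}=qH^{p}(dm)$ for some $m$-measurable $q$ with $|q|=1$, and feeding this into the sandwich yields exactly $I\,qH^{p}(dm)\subseteq\mathcal{M}\subseteq qH^{p}(dm)$. This is precisely where the hypothesis that some $f\in\mathcal{M}$ has $\int_{X}f\,dm\neq 0$ must enter. The only alternative to simple invariance is the doubly-invariant (reducing) case $\mathcal{K}=\chi_{E}L^{p}(dm)$; but in that case the sandwich collapses, since $I\chi_{E}L^{p}(dm)=\chi_{E}L^{p}(dm)$ forces $\mathcal{M}=\chi_{E}L^{p}(dm)$ as well, and this band possibility is what the nonzero-mean hypothesis is meant to exclude. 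I would exploit the core-measure hypothesis on $m$ carried over from Theorem \ref{gm61}: under it the representing functional $\varphi$ interacts with reducing bands in a controlled way, and the existence of a function in $\mathcal{M}\subseteq\mathcal{K}$ with nonzero mean is the intended obstruction to $\mathcal{K}$ being a pure band $\chi_{E}L^{p}(dm)$. Pinning down this incompatibility rigorously is the delicate point; the remainder of the proof is formal.

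Finally, for $p=2$ I would refine the sandwich by orthogonality. Since $|q|=1$, multiplication by $\bar q$ is unitary on $L^{2}(dm)$, so $I\,qH^{2}(dm)\subseteq\mathcal{M}\subseteq qH^{2}(dm)$ becomes $IH^{2}(dm)\subseteq\bar q\,\mathcal{M}\subseteq H^{2}(dm)$. Using the orthogonal decomposition $H^{2}(dm)=\big(H^{2}(dm)\ominus IH^{2}(dm)\big)\oplus IH^{2}(dm)$ and setting $W=\bar q\,\mathcal{M}\cap\big(H^{2}(dm)\ominus IH^{2}(dm)\big)$, the inclusion $IH^{2}(dm)\subseteq\bar q\,\mathcal{M}$ gives $\bar q\,\mathcal{M}=W\oplus IH^{2}(dm)$, and therefore $\mathcal{M}=q\big(W\oplus IH^{2}(dm)\big)$ with $W\subseteq H^{2}(dm)$, as required.
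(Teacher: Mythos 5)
Your architecture coincides with the paper's proof almost step for step: the paper sets $\mathcal{M}_1=clos_p[H^\infty(dm)\mathcal{M}]$ (your $\mathcal{K}$), derives the same sandwich $I\mathcal{M}_1\subseteq\mathcal{M}\subseteq\mathcal{M}_1$ from the isometry of multiplication by $I$, invokes the uniform-algebra Beurling theorem for simply invariant subspaces to get $\mathcal{M}_1=qH^p(dm)$, and for $p=2$ performs the same orthogonal refinement (your version of that last step, via unitarity of multiplication by $\bar q$ and $W=\bar q\mathcal{M}\cap\bigl(H^2(dm)\ominus IH^2(dm)\bigr)$, is actually more detailed than the paper's one-line assertion). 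The one step you leave open is, however, a genuine gap in your proposal, and it is worth knowing how the paper disposes of it: in a single sentence, it asserts that since some $f\in\mathcal{M}\subseteq\mathcal{M}_1$ has $\int_X f\,dm\neq 0$, the set $A_0\mathcal{M}_1$ cannot be dense in $\mathcal{M}_1$, i.e. $\mathcal{M}_1$ is simply invariant. That inference tacitly assumes every element of $clos_p(A_0\mathcal{M}_1)$ has zero mean, which is true when the $\mathcal{M}_1$-factors lie in $H^p(dm)$ (there $\int ah\,dm=\varphi(a)\varphi(h)=0$ for $a\in A_0$), but not for a general invariant $\mathcal{M}_1\subseteq L^p(dm)$.

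Your instinct that this is the delicate point is therefore sound, but your proposed repair cannot succeed, because the nonzero-mean hypothesis genuinely fails to exclude the band case. Take the disk algebra on $X=\mathbb{T}$ with $m$ normalized Lebesgue measure (here $M_\varphi=\{m\}$, so the finite-dimensionality and core-measure hypotheses are satisfied and give you no extra leverage), and let $\mathcal{M}=\chi_E L^p$ with $m(E)>0$, or even $\mathcal{M}=L^p$ itself. This subspace is invariant under $IH^\infty$ and contains $\chi_E$, whose mean is $m(E)\neq 0$; yet $A_0\mathcal{M}\supseteq z\chi_E L^p=\chi_E L^p$, so $\mathcal{M}_1=\mathcal{M}$ is not simply invariant, and $\mathcal{M}$ is contained in no $qH^p$: otherwise $\chi_E L^p=\bar q\chi_E L^p\subseteq H^p$, while $\chi_E L^p$ is invariant under multiplication by $\bar z^n$ for all $n$, and the only such subspace of $H^p$ is trivial. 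So the incompatibility you hoped to pin down via the core-measure structure is not there to be pinned down; the hypothesis itself must be strengthened -- either to ``$\mathcal{M}$ is not invariant under $H^\infty(dm)$'' (as in the paper's own Corollary 3.3) or by keeping the alternative $\mathcal{M}=\chi_E L^p(dm)$ in the conclusion, as in Raghupathi's Theorem C. Modulo this defect -- which your proposal shares with, and honestly flags while the paper elides -- the sandwich argument and the $p=2$ decomposition are correct and are exactly the paper's route.
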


\begin{proof}
Let us take $\mathcal{M}_1 = clos_p[H^\infty(dm)\mathcal{M}]$, where the closure (weak-star when $p=\infty$) is taken in $L^p(dm)$, then
\begin{equation}\label{eqn31}
I\mathcal{M}_1 = I.clos_p [H^\infty(dm)\mathcal{M}]= clos_p[IH^\infty(dm)\mathcal{M}]
\end{equation}

Since $H^\infty(dm)$ is a Banach algebra under $\sup$-norm, $A_0 \mathcal{M}_1 \subseteq \mathcal{M}_1$. This implies $\mathcal{M}_1$ is an invariant subspace of $L^p(dm)$. Also $\mathcal{M} \subseteq \mathcal{M}_1$, so by hypothesis it follows $\int\limits^{}_{X} f dm \neq 0$, for some $f$ in $\mathcal{M}_1$, and hence $A_0 \mathcal{M}_1$ is not dense in $\mathcal{M}_1$. Therefore, $\mathcal{M}_1$ is simply invariant. Thus $\mathcal{M}_1 = q H^p(dm)$ for some $L^\infty(dm)$ function with $|q|=1$. So, (\ref{eqn31}) becomes
\begin{equation*}
I.qH^p(dm) \subseteq \mathcal{M} \subseteq qH^p(dm)
\end{equation*}

When $p=2$, then there exists a subspace $W \subset H^2(dm)$ such that $W \oplus IH^2(dm) \subseteq H^2(dm)$. Then we have
\begin{equation}\label{eqn32}
\mathcal{M} = q(W \oplus IH^2(dm))
\end{equation}


and the proof is complete.
\end{proof}
When $X= \mathbb{T}$ the unit circle, then the algebra $A$ becomes the disk algebra and $L^p(dm)= L^p$, and we obtain the following part of Theorem $3.1$, in \cite{mrgh}, as a corollary.

\begin{cor}
Let $I$ be an inner function and $\mathcal{M}$ be a subspace of $L^p$, $1\leq p \leq \infty$, invariant under $IH^{\infty}$ but not invariant under $H^\infty$, then there exists a unimodular function $q$ such that $I.qH^p \subseteq \mathcal{M} \subseteq qH^p$. When $p=2$, there exists $W \subseteq H^2 \ominus IH^2$ and $\mathcal{M} = q(W \oplus IH^2)$.
\end{cor}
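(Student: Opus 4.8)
The plan is to read the statement off from Theorem \ref{unal} by specialising the uniform--algebra machinery to the classical disk. Take $X=\mathbb{T}$, let $A$ be the disk algebra, and let $\varphi$ be evaluation at the origin; its unique representing measure is normalised Lebesgue measure $m$, so that $M_\varphi$ is a single point (in particular finite dimensional) and $m$ is trivially a core measure. Hence the standing hypotheses of Theorems \ref{gm61} and \ref{unal} are met. Under this identification $L^p(dm)=L^p$, $H^p(dm)=H^p$, $A_0=\{f\in A:f(0)=0\}$, and the functional $f\mapsto\int_{\mathbb{T}}f\,dm$ is simply $f\mapsto\hat f(0)$, the zeroth Fourier coefficient, for every $f\in L^1$. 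Thus the real content of the corollary is to convert the hypothesis ``$\mathcal{M}$ is invariant under $IH^\infty$ but not under $H^\infty$'' into a form to which Theorem \ref{unal} applies, and then to unwind the conclusion.

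For the conversion I would pass to $\mathcal{M}_1=clos_p[H^\infty\mathcal{M}]$, the smallest closed (weak--star closed when $p=\infty$) $H^\infty$--invariant subspace containing $\mathcal{M}$, exactly as in the proof of Theorem \ref{unal}. Since $\mathcal{M}$ is not $H^\infty$--invariant we have $\mathcal{M}\subsetneq\mathcal{M}_1$, and the aim is to force $\mathcal{M}_1$ to be \emph{simply} invariant, so that the simply--invariant structure theory yields $\mathcal{M}_1=qH^p$ for a unimodular $q$. \textbf{This is the step I expect to be the main obstacle.} The dichotomy for $H^\infty$--invariant subspaces on the disk leaves only two possibilities for $\mathcal{M}_1$: either $\mathcal{M}_1=qH^p$ (simply invariant) or $\mathcal{M}_1=\chi_E L^p$ (doubly invariant). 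The delicate point is to eliminate the second alternative, i.e. to show that if the $H^\infty$--closure of $\mathcal{M}$ were reducing then the $IH^\infty$--invariance of $\mathcal{M}$ together with $clos_p[H^\infty\mathcal{M}]=\chi_E L^p$ would already force $\mathcal{M}=\chi_E L^p$, so that $\mathcal{M}$ would be $H^\infty$--invariant, contradicting the hypothesis. One must be careful here: the bare integral condition of Theorem \ref{unal} cannot be used as a black box, since a mean--zero generator can still produce a simply invariant $\mathcal{M}_1$, so it is precisely ``not $H^\infty$--invariant'', rather than ``some $f$ has nonzero mean'', that excludes the reducing case. I would attempt this by exploiting that on $\chi_E L^p$ multiplication by the unimodular $I$ is invertible: from $I\mathcal{M}\subseteq\mathcal{M}$ one obtains an increasing chain $\mathcal{M}\subseteq\bar I\mathcal{M}\subseteq\bar I^{2}\mathcal{M}\subseteq\cdots$ whose union is dense (its closure is invariant under both $H^\infty$ and $\bar I$, hence is all of $\chi_E L^p$), and the heart of the matter is to show that this chain cannot be strictly increasing.

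Granting that $\mathcal{M}_1=qH^p$, the rest is routine. On one side $\mathcal{M}\subseteq\mathcal{M}_1=qH^p$. On the other, since multiplication by $I$ commutes with closure, $I\mathcal{M}_1=I\,clos_p[H^\infty\mathcal{M}]=clos_p[IH^\infty\mathcal{M}]\subseteq\mathcal{M}$, because $\mathcal{M}$ is closed and $IH^\infty$--invariant; hence $IqH^p\subseteq\mathcal{M}$, giving the sandwich $IqH^p\subseteq\mathcal{M}\subseteq qH^p$. Finally, for $p=2$, multiply through by the unimodular $\bar q$, a unitary on $L^2$, to reduce to $IH^2\subseteq\bar q\mathcal{M}\subseteq H^2$. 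Setting $W=(\bar q\mathcal{M})\cap(H^2\ominus IH^2)$ and using the orthogonal decomposition $H^2=(H^2\ominus IH^2)\oplus IH^2$ together with $IH^2\subseteq\bar q\mathcal{M}$, every element of $\bar q\mathcal{M}$ splits into its $W$--part and its $IH^2$--part, both of which lie in $\bar q\mathcal{M}$; thus $\bar q\mathcal{M}=W\oplus IH^2$ with $W\subseteq H^2\ominus IH^2$, and therefore $\mathcal{M}=q(W\oplus IH^2)$, as required.
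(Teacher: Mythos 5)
Your overall route is the paper's own: the paper proves this corollary simply by setting $X=\mathbb{T}$, $A$ the disk algebra, $m$ normalized Lebesgue measure in Theorem \ref{unal}, and your identification of $L^p(dm)=L^p$, $H^p(dm)=H^p$ is exactly that specialization. Where you depart from the paper is in refusing to use the hypothesis of Theorem \ref{unal} as a black box, and your suspicion there is well founded: ``not invariant under $H^\infty$'' neither implies nor is implied by ``$\int_{\mathbb{T}}f\,dm\neq 0$ for some $f\in\mathcal{M}$''. For instance, with $I=z^2$ and $\mathcal{M}=\mathbb{C}z\oplus z^3H^2$, the corollary's hypotheses hold (here $\mathcal{M}=q(W\oplus IH^2)$ with $q=z$, $W=\mathbb{C}$), yet every element of $\mathcal{M}$ has vanishing mean, so Theorem \ref{unal} is literally inapplicable; the paper's one-line derivation silently glosses over this, whereas you correctly see that what must be shown is that ``not $H^\infty$-invariant'' rules out the doubly invariant alternative $\mathcal{M}_1=\chi_E L^p$ in the dichotomy for the $H^\infty$-invariant hull $\mathcal{M}_1=clos_p[H^\infty\mathcal{M}]$. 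Your closing steps (the sandwich $I\,qH^p\subseteq\mathcal{M}\subseteq qH^p$, and for $p=2$ the splitting $\bar q\mathcal{M}=W\oplus IH^2$ with $W=(\bar q\mathcal{M})\cap(H^2\ominus IH^2)$) are correct and in fact more careful than the paper's own $p=2$ step, which merely asserts the existence of $W$.

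The genuine gap is the step you yourself flag: you never prove that the chain $\mathcal{M}\subseteq\bar I\mathcal{M}\subseteq\bar I^2\mathcal{M}\subseteq\cdots$ cannot be strictly increasing, and nothing in your proposal supplies this --- density of the union of an increasing chain of closed subspaces says nothing about stabilization, so as written the elimination of the reducing case is incomplete. Fortunately the chain is a detour: the inclusion $I\mathcal{M}_1\subseteq\mathcal{M}\subseteq\mathcal{M}_1$, which you already established from $I\mathcal{M}_1=I\,clos_p[H^\infty\mathcal{M}]=clos_p[IH^\infty\mathcal{M}]\subseteq\mathcal{M}$, kills the doubly invariant case in one line. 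Indeed, if $\mathcal{M}_1=\chi_E L^p$, then since $|I|=1$ a.e.\ multiplication by $I$ maps $L^p$ onto $L^p$ (with inverse multiplication by $\bar I$), so $I\chi_E L^p=\chi_E(IL^p)=\chi_E L^p$; hence $\chi_E L^p=I\mathcal{M}_1\subseteq\mathcal{M}\subseteq\mathcal{M}_1=\chi_E L^p$, forcing $\mathcal{M}=\chi_E L^p$, which is $H^\infty$-invariant and contradicts the hypothesis. Therefore $\mathcal{M}_1$ is simply invariant, the Helson--Wiener dichotomy (norm closed for $p<\infty$, weak-star closed for $p=\infty$) gives $\mathcal{M}_1=qH^p$ with $q$ unimodular, and the remainder of your argument goes through verbatim. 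With that one-line repair your proof is complete, and it actually fills the logical step that the paper's own derivation of the corollary leaves unaddressed.
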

\section{Theorem C for compact abelian groups}\label{CAG}
We use $K$ to denote a compact abelian group dual to a discrete group $\Gamma $ and $\sigma $ to denote the Haar measure on $K$ which is finite and normalized so that $\sigma (K)=1$. For each $\lambda $ in $\Gamma $, let $\chi _{\lambda }$ denote the character on $K$ defined by $\chi_{\lambda }(x)=x(\lambda )$, for all $x$ in $K$. $L^{p}(d\sigma )$, $1 \leq p <\infty $ denotes the space of functions whose $p^{th}$- power in absolute value is integrable on $K$ with respect to the Haar measure $\sigma $. $L^{\infty }(d\sigma )$ is the space of essentially bounded functions w.r.t. the Haar measure $\sigma $. For $p=2$, the space $L^{2}(d\sigma )$ is a Hilbert space with inner product 
\[
\langle f,g\rangle =\int\limits_{K}^{{}}f(x)\overline{g(x)}d\sigma ,\quad
\forall ~f,g\in L^{2}(d\sigma ) 
\]
and the set of characters $\{\chi _{\lambda }\}_{\lambda \in \Gamma }$ forms an orthonormal basis of $L^{2}(d\sigma )$. Every $f$ in $L^{1}(d\sigma )$ has a Fourier series in terms of $\{\chi _{\lambda }\}_{\lambda \in \Gamma }$
i.e. 
\[
f(x)\sim \sum\limits_{\lambda \in \Gamma }^{{}}a_{\lambda }(f)\chi _{\lambda}(x),\quad \text{where}\quad \hspace{1mm}a_{\lambda}(f)=\int\limits_{K}^{{}}f(x)\overline{\chi _{\lambda }(x)}d\sigma . 
\]
Suppose $\Gamma _{+}$ is a semigroup such that $\Gamma $ is the disjoint union $\Gamma _{+}\cup \{0\}\cup \Gamma _{-}$, where $0$ denote the identity element of $\Gamma $ and $\Gamma _{-}=-\Gamma _{+}$. We say the elements of $\Gamma _{+}$ are positive and those of $\Gamma _{-}$ are negative. The group $\Gamma $ induces an order under these conditions. Details can be found in \cite{RFA}.

For a subspace $\mathcal{M}$ of $L^p(d\sigma)$, we set $\mathcal{M}_\lambda = \chi_\lambda .\mathcal{M}$ and $\mathcal{M}_{-} = clos_p \left[ \bigcup \limits^{}_{\lambda > 0} \chi_{\lambda}.\mathcal{M} \right]$. We say a function in $L^2(d\sigma)$ is \textit{analytic} if $a_\lambda(f) = 0$ for all $\lambda < 0$. $H^2(d\sigma)$ is the subspace of $L^2(d\sigma)$ consisting of all analytic functions in $L^2(d\sigma)$. For each $\lambda \in \Gamma$, $\chi_\lambda$ is an isometry on $H^2(d\sigma)$ and the adjoint
operator of $\chi_\lambda$ is
\[
\chi^*_\lambda f(x) = P \chi_{-\lambda}f(x) 
\]
where $P$ is the orthogonal projection of $L^2(d\sigma)$ on $H^2(d\sigma)$.

A closed subspace $\mathcal{M}$ of a Hilbert space $\mathcal{H}$ is said to be an \textit{invariant subspace} under $\{\chi _{\lambda }\}_{\lambda \in \Gamma _{0}}$ if $\chi _{\lambda }\mathcal{M}\subset \mathcal{M}$ for all $ \lambda $ in $\Gamma _{0}$, where $\Gamma _{0}\subseteq \Gamma $ such that $%
\Gamma _{0}\cap \Gamma _{0}^{-1}=\{0\}$ and $\Gamma _{0}\Gamma_{0}^{-1}=\Gamma $. $\mathcal{M}$ is said to be \textit{doubly invariant} if $\chi _{\lambda }\mathcal{M}\subset \mathcal{M}$ and $\chi _{\lambda }^{\ast}\mathcal{M}\subset \mathcal{M}$ for all $\lambda $ in $\Gamma _{0}$, where $\chi _{\lambda }^{\ast }$ denote the adjoint operator of $\chi _{\lambda }$.
We call a semigroup $\{\chi _{\lambda }\}_{\lambda \in \Gamma _{0}}$ of operators $unitary$ if $\chi _{\lambda }$ is a unitary operator for each $\lambda \in \Gamma _{0}$ and $quasi$ $unitary$ if the closure of $[\bigcup\limits_{\lambda \notin \Gamma _{0}^{-1}}^{{}}\chi _{\lambda }(\mathcal{H})]=\mathcal{H}$. A semi group $\{ T_s\}_{s \in \Gamma_0}$ is called $totally~ non$-$unitary$ if for any $doubly~ invariant$ subspace $\mathcal{M}$ for which $\{ T_s |\mathcal{M}\}_{s \in \Gamma_0}$ is $quasi$-$unitary$, we have $\mathcal{M}= \{ 0\}$. 

First we present a new proof of the Helson-Lowdenslager generalization of Beurling's theorem in the setting of compact abelian groups. The statement of this theorem, in \cite{HCAG}, runs as follows:

\begin{theorem} \cite[Theorem 1]{HCAG}. \label{hcag}
Let $\mathcal{M}$ be an invariant subspace larger than $\mathcal{M}_{-}$. Then $\mathcal{M}= q.H^2$, where $q$ is measurable on $K$ and $|q(x)|=1$ almost everywhere. 
\end{theorem}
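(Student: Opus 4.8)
The plan is to characterize $\mathcal{M}$ by isolating a single unimodular generator $q$, exploiting the hypothesis that $\mathcal{M}$ is strictly larger than $\mathcal{M}_{-}=\operatorname{clos}_2\bigl[\bigcup_{\lambda>0}\chi_\lambda\mathcal{M}\bigr]$. First I would observe that invariance means $\chi_\lambda\mathcal{M}\subseteq\mathcal{M}$ for all $\lambda>0$, so $\mathcal{M}_{-}\subseteq\mathcal{M}$ and $\mathcal{M}_{-}$ is itself invariant; the strict containment $\mathcal{M}_{-}\subsetneq\mathcal{M}$ is the precise analogue of \emph{simple invariance} from Section~\ref{ua}. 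The key first step is to produce the generator: since $\mathcal{M}\ominus\mathcal{M}_{-}$ is nonzero, I would pick a unit vector $q$ in this orthogonal complement. The defining property is that $q\perp\chi_\lambda\mathcal{M}$ for every $\lambda>0$, and in particular $q\perp\chi_\lambda q$ for all $\lambda>0$; taking conjugates via the group structure gives $q\perp\chi_\lambda q$ for all $\lambda\neq 0$.

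The central computation is then to show $|q|$ is constant. From $\langle\chi_\lambda q,q\rangle=0$ for all $\lambda\neq 0$, I would compute the Fourier coefficients of the $L^1$ function $|q|^2=q\bar q$: for $\lambda\neq 0$,
\[
a_\lambda\bigl(|q|^2\bigr)=\int_K |q|^2\,\overline{\chi_\lambda}\,d\sigma=\int_K q\,\overline{\chi_\lambda q}\,d\sigma=\langle q,\chi_\lambda q\rangle=0.
\]
Hence $|q|^2$ has only the zeroth Fourier coefficient, so $|q|^2$ equals the constant $\|q\|^2=1$ almost everywhere, giving $|q(x)|=1$ a.e. This is the step I expect to be the main obstacle, since it is where the order structure on $\Gamma$ and the completeness of the characters must be used carefully to conclude that vanishing of all nonzero Fourier coefficients of an $L^1$ function forces it to be constant.

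Having secured that $q$ is unimodular, I would establish the two inclusions making up $\mathcal{M}=qH^2$. For $qH^2\subseteq\mathcal{M}$: the analytic polynomials (finite combinations of $\chi_\lambda$ with $\lambda\geq 0$) are dense in $H^2$, so it suffices to show $q\chi_\lambda\in\mathcal{M}$ for $\lambda\geq 0$. Since $\chi_\lambda q\in\chi_\lambda\mathcal{M}\subseteq\mathcal{M}_{-}\subseteq\mathcal{M}$ for $\lambda>0$ and $q\in\mathcal{M}$ itself, this inclusion follows after checking closure in the $L^2$ norm. For the reverse inclusion $\mathcal{M}\subseteq qH^2$: because $|q|=1$, multiplication by $\bar q$ is unitary on $L^2(d\sigma)$, so I would transfer the problem to $\bar q\mathcal{M}$ and show this lands in $H^2$. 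The defining orthogonality $q\perp\chi_\lambda\mathcal{M}$ for $\lambda>0$ rephrases, after multiplying by $\bar q$, as the statement that every element of $\bar q\mathcal{M}$ is orthogonal to $\chi_\lambda$ for all $\lambda<0$, i.e. has vanishing negative Fourier coefficients, which is exactly membership in $H^2$. Combining the two inclusions yields $\mathcal{M}=qH^2$ and completes the argument.
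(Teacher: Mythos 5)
Your proposal is correct, but it takes a genuinely different route from the paper. The paper's proof (Theorem \ref{suciu}) is built on Suciu's decomposition theorem for semigroups of isometries (Theorem \ref{Suciu}): it writes $\mathcal{M} = \mathcal{L} \oplus \sum_{\lambda \geq 0} \chi_\lambda(\mathcal{N})$ with $\mathcal{L}$ quasi-unitary and $\mathcal{N}$ the wandering subspace $\mathcal{M} \ominus \mathcal{M}_{-}$, and then argues in three steps: any unit vector $\varphi \in \mathcal{N}$ is unimodular (by exactly your Fourier-coefficient computation applied to $|\varphi|^2$), $\mathcal{N}$ is one-dimensional (all Fourier coefficients of $\varphi\bar{\psi}$ vanish, so $\varphi\bar{\psi}=0$ a.e.\ and $\psi=0$), and $\mathcal{L}=\{0\}$ (the same computation applied to $f\bar{\varphi}$), whence $\mathcal{M} = \sum_{\lambda\geq 0}\chi_\lambda\varphi = \varphi H^2$. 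You share the paper's generator (a unit vector $q \in \mathcal{M}\ominus\mathcal{M}_{-}$) and its central unimodularity computation, but you replace the entire decomposition machinery with two direct inclusions: $qH^2 \subseteq \mathcal{M}$ by invariance plus density of the analytic polynomials, and $\mathcal{M} \subseteq qH^2$ by noting that for $f \in \mathcal{M}$ and $\lambda > 0$ one has $a_{-\lambda}(\bar{q}f) = \overline{\langle q, \chi_\lambda f\rangle} = 0$ since $\chi_\lambda f \in \mathcal{M}_{-} \perp q$, so that $\bar{q}f \in H^2$. This single observation subsumes in one stroke both the one-dimensionality of $\mathcal{N}$ and the vanishing of the quasi-unitary part $\mathcal{L}$, and it is essentially the classical Helson--Lowdenslager (Srinivasan-style) argument. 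What each buys: your route is shorter, more elementary and self-contained, requiring only the uniqueness theorem for Fourier coefficients of $L^1$ functions on $K$ (which, as you rightly flag, is where the ordered structure of $\Gamma$ and the completeness of the characters enter); the paper's route is longer by design---its announced purpose is to give a \emph{new} proof via Suciu's quasi-unitary/totally-non-unitary decomposition, so the operator-theoretic detour is the point of that section rather than an inefficiency, whereas your argument recovers the more traditional direct proof.
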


Our proof relies on the Suciu decomposition for a semigroup of isometries as stated below.

\begin{theorem}(Suciu's Decomposition, \cite[Theorem 2]{ISSG}). \label{Suciu}
Let $\{ T_s\}_{s \in \Gamma_0}$ be a semigroup of isometries on a Hilbert space $\mathcal{H}$. The space $\mathcal{H}$ may be decomposed uniquely in the form
\[
\mathcal{H} = \mathcal{H}_q \oplus \mathcal{H}_t
\] 
in such a way that $\mathcal{H}_q$ and $\mathcal{H}_t$ are doubly invariant subspaces, $\{ T_s |\mathcal{H}_q\}_{s \in \Gamma_0}$ is quasi-unitary and $\{ T_s |\mathcal{H}_t\}_{s \in \Gamma_0}$ is totally non-unitary.
\end{theorem}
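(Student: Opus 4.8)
The plan is to produce $\mathcal{H}_q$ directly as the largest doubly invariant subspace on which the semigroup restricts to a quasi-unitary one, and then to take $\mathcal{H}_t$ to be its orthogonal complement. First I would introduce the family $\mathcal{C}$ of all closed doubly invariant subspaces $\mathcal{N} \subseteq \mathcal{H}$ for which $\{T_s|_{\mathcal{N}}\}_{s \in \Gamma_0}$ is quasi-unitary, i.e. $\text{clos}[\bigcup_{s \neq 0} T_s \mathcal{N}] = \mathcal{N}$. This family contains $\{0\}$, and the first task is to check that it is stable under the formation of arbitrary closed linear spans. Double invariance of such a span is immediate, since each $T_s$ and each $T_s^{\ast}$ preserves every summand. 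For quasi-unitarity of $\mathcal{N} = \text{clos}[\bigcup_{\alpha} \mathcal{N}_{\alpha}]$ I would observe that $\text{clos}[\bigcup_{s \neq 0} T_s \mathcal{N}] \supseteq \text{clos}[\bigcup_{s \neq 0} T_s \mathcal{N}_{\alpha}] = \mathcal{N}_{\alpha}$ for every $\alpha$, so the left side contains $\mathcal{N}$, while the reverse inclusion is just invariance. Hence $\mathcal{H}_q := \text{clos}[\bigcup\{\mathcal{N} : \mathcal{N} \in \mathcal{C}\}]$ is itself a member of $\mathcal{C}$ and is its maximum.

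I would then set $\mathcal{H}_t = \mathcal{H} \ominus \mathcal{H}_q$. Since $\mathcal{H}_q$ is doubly invariant it reduces every $T_s$, so its orthogonal complement is again reducing and therefore doubly invariant; this delivers the orthogonal decomposition together with double invariance of both summands, and quasi-unitarity on $\mathcal{H}_q$ holds by construction. To prove $\{T_s|_{\mathcal{H}_t}\}$ is totally non-unitary, I would take an arbitrary doubly invariant $\mathcal{M} \subseteq \mathcal{H}_t$ whose restricted semigroup is quasi-unitary; then $\mathcal{M} \in \mathcal{C}$, so $\mathcal{M} \subseteq \mathcal{H}_q$, forcing $\mathcal{M} \subseteq \mathcal{H}_q \cap \mathcal{H}_q^{\perp} = \{0\}$. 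A point to be careful about here is that, for a reducing subspace $\mathcal{M}$, the Hilbert-space adjoint of $T_s|_{\mathcal{M}}$ computed inside $\mathcal{M}$ coincides with $T_s^{\ast}|_{\mathcal{M}}$, so that ``quasi-unitary as a semigroup on $\mathcal{M}$'' is exactly the membership condition defining $\mathcal{C}$.

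For uniqueness, suppose $\mathcal{H} = \mathcal{H}_q' \oplus \mathcal{H}_t'$ is a second decomposition with the stated properties. Then $\mathcal{H}_q'$ lies in $\mathcal{C}$, so $\mathcal{H}_q' \subseteq \mathcal{H}_q$ and dually $\mathcal{H}_t \subseteq \mathcal{H}_t'$. I would examine the reducing subspace $\mathcal{N} = \mathcal{H}_q \cap \mathcal{H}_t' = \mathcal{H}_t' \ominus \mathcal{H}_t$ and aim to show its restricted semigroup is quasi-unitary; granting this, $\mathcal{N}$ is a quasi-unitary doubly invariant subspace inside the totally non-unitary part $\mathcal{H}_t'$, whence $\mathcal{N} = \{0\}$, giving $\mathcal{H}_t' = \mathcal{H}_t$ and thus $\mathcal{H}_q' = \mathcal{H}_q$. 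The needed fact is that restricting a quasi-unitary semigroup to a reducing subspace preserves quasi-unitarity: writing $P$ for the orthogonal projection onto $\mathcal{N}$, which commutes with each $T_s$ and is norm continuous, one computes $\text{clos}[\bigcup_{s \neq 0} T_s \mathcal{N}] = P\left(\text{clos}[\bigcup_{s \neq 0} T_s \mathcal{H}_q]\right) = P\mathcal{H}_q = \mathcal{N}$, the first equality using that $\mathcal{N}$ reduces the semigroup and the middle one using quasi-unitarity of $\mathcal{H}_q$.

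The step I expect to be the main obstacle is exactly the interaction between quasi-unitarity---a density statement about the spans of the ranges $T_s\mathcal{N}$---and the two operations used above, namely forming arbitrary closed spans and passing to reducing subspaces. Both hinge on the orthogonal projection onto a reducing subspace commuting with every $T_s$ and $T_s^{\ast}$ and, being bounded, being movable through a closed linear span; handling this rigorously (rather than taking for granted an identity such as $P(\text{clos}[A]) = \text{clos}[P(A)]$, which genuinely requires the reduction hypothesis and the quasi-unitarity of $\mathcal{H}_q$ to make the relevant image closed) is where the real care is needed.
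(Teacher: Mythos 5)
The paper does not actually prove this statement: it is imported verbatim, with citation, from Suciu's article \cite{ISSG} (Theorem 2 there) and used as a black box in the proof of Theorem \ref{suciu}, so there is no in-paper argument to compare yours against. Judged on its own terms, your proof is correct and complete, and it follows the standard maximality device (the same one behind the Sz.-Nagy--Foia\c{s} decomposition of a contraction into unitary and completely non-unitary parts), which is also the spirit of Suciu's original argument. The three delicate points all check out. First, stability of $\mathcal{C}$ under closed spans is exactly as you say: $\mathrm{clos}\bigl[\bigcup_{s\neq 0} T_s\mathcal{N}\bigr]$ contains each $\mathcal{N}_\alpha$ by quasi-unitarity of $\mathcal{N}_\alpha$, hence contains their closed span, while invariance gives the reverse inclusion. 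Second, since a doubly invariant closed subspace reduces every $T_s$, the adjoint of $T_s|_{\mathcal{M}}$ computed inside $\mathcal{M}$ is $T_s^{\ast}|_{\mathcal{M}}$; this identification is what makes ``quasi-unitary for the restricted semigroup'' coincide with the membership condition defining $\mathcal{C}$, and hence legitimizes both the total non-unitarity of $\mathcal{H}_t$ and the uniqueness step --- you were right to flag it, and your handling is correct. Third, in the uniqueness argument the only nontrivial direction of your displayed chain is $\mathcal{N}=P\mathcal{H}_q=P\bigl(\mathrm{clos}[A]\bigr)\subseteq \mathrm{clos}[PA]=\mathrm{clos}\bigl[\bigcup_{s\neq 0}T_s\mathcal{N}\bigr]$, where $A$ is the span of the ranges $T_s\mathcal{H}_q$ and $PT_s=T_sP$ because $\mathcal{N}$ reduces each $T_s$; the opposite inclusion follows from invariance and closedness of $\mathcal{N}$, so the equality you wrote is in fact true, not merely the needed containment. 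One small observation: your identification $\mathcal{H}_q\cap\mathcal{H}_t'=\mathcal{H}_t'\ominus\mathcal{H}_t$ silently uses $\mathcal{H}_t\subseteq\mathcal{H}_t'$ to split each $x\in\mathcal{H}_t'$ along $\mathcal{H}=\mathcal{H}_q\oplus\mathcal{H}_t$ with both components staying in $\mathcal{H}_t'$; that is valid, but worth one line in a final write-up.
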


\begin{theorem}
\label{suciu} Let $\mathcal{M}$ be a closed subspace of $L^2(d\sigma)$ and $\mathcal{M}_{-} \subsetneq \mathcal{M}$. If $\mathcal{M}$ is invariant under the semigroup of characters $\{ \chi_{\lambda}\}_{\lambda \geq 0}$ (i.e. $\lambda \in \Gamma_+ \cup \{0\}$ ), then 
\[
\mathcal{M} = \varphi H^2(d\sigma) 
\]
where $\varphi$ is a $\sigma$-measurable function and $|\varphi(x)|= 1$ almost everywhere.
\end{theorem}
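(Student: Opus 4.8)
The plan is to realize $\mathcal{M}$ as the $\mathcal{H}_t$-part of the Suciu decomposition (Theorem \ref{Suciu}) applied to the semigroup $\{\chi_\lambda\}_{\lambda \geq 0}$ acting on $\mathcal{M}$, and then to identify a totally non-unitary invariant subspace on which the characters act as shifts with a subspace of the form $\varphi H^2(d\sigma)$. Let me sketch the mechanism.

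First I would observe that each $\chi_\lambda$ with $\lambda \geq 0$ restricts to an isometry of $L^2(d\sigma)$, and since $\mathcal{M}$ is invariant, $\{\chi_\lambda|_{\mathcal{M}}\}_{\lambda \geq 0}$ is a semigroup of isometries on the Hilbert space $\mathcal{M}$. Apply Theorem \ref{Suciu} to get the unique decomposition $\mathcal{M} = \mathcal{M}_q \oplus \mathcal{M}_t$ into doubly invariant subspaces, with $\{\chi_\lambda|_{\mathcal{M}_q}\}$ quasi-unitary and $\{\chi_\lambda|_{\mathcal{M}_t}\}$ totally non-unitary. The next step, and the heart of the argument, is to show that the hypothesis $\mathcal{M}_{-} \subsetneq \mathcal{M}$ forces $\mathcal{M}_q = \{0\}$, so that $\mathcal{M} = \mathcal{M}_t$ is itself totally non-unitary. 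The key link is that the quasi-unitary part corresponds precisely to the part of $\mathcal{M}$ that is already captured by $\mathcal{M}_{-} = \operatorname{clos}_2\!\left[\bigcup_{\lambda > 0}\chi_\lambda \mathcal{M}\right]$: intuitively, $\mathcal{M}_q$ is where the characters act unitarily (in the quasi sense, i.e.\ $\mathcal{M}_q$ is reducing and the orbit fills it up), which is exactly the condition that makes $\mathcal{M}_q \subseteq \mathcal{M}_{-}$. Thus the strict inclusion $\mathcal{M}_{-} \subsetneq \mathcal{M}$ guarantees a nonzero totally non-unitary summand; combined with the directness of the decomposition one concludes $\mathcal{M}_q = \{0\}$.

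Once $\mathcal{M}$ is totally non-unitary, the remaining task is to produce the unimodular $\varphi$ with $\mathcal{M} = \varphi H^2(d\sigma)$. Here I would exploit that for a totally non-unitary semigroup of isometries generated by the characters, the wandering data can be packaged into a single generator. Concretely, one shows the residual space $\mathcal{M} \ominus \mathcal{M}_{-}$ is one-dimensional, spanned by a function $\varphi$; multiplying $\varphi$ against the characters $\chi_\lambda$, $\lambda \geq 0$, and taking closed spans recovers all of $\mathcal{M}$. Verifying $|\varphi| = 1$ a.e.\ proceeds by the standard Helson--Lowdenslager orthogonality computation: for $\lambda > 0$ the function $\chi_\lambda \varphi$ lies in $\mathcal{M}_{-} \perp \varphi$, which yields $\int |\varphi|^2 \chi_\lambda \, d\sigma = 0$ for all $\lambda \neq 0$, so all nonzero Fourier coefficients of $|\varphi|^2$ vanish and $|\varphi|^2$ is a.e.\ constant; normalizing makes it $1$. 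Finally, the map $\chi_\lambda \mapsto \varphi \chi_\lambda$ extends to a unitary from $H^2(d\sigma)$ onto $\mathcal{M}$, giving the asserted representation.

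The main obstacle I anticipate is the bridge in the second step—rigorously matching the abstract quasi-unitary/totally-non-unitary dichotomy of Suciu with the concrete condition $\mathcal{M}_{-} \subsetneq \mathcal{M}$. This requires a clean characterization of $\mathcal{M}_{-}$ inside the decomposition, namely showing $\mathcal{M}_{-} = \mathcal{M}_q \oplus (\mathcal{M}_t)_{-}$ and that quasi-unitarity of $\{\chi_\lambda|_{\mathcal{M}_q}\}$ is equivalent to $(\mathcal{M}_q)_{-} = \mathcal{M}_q$. The dimension-one-ness of the wandering space $\mathcal{M} \ominus \mathcal{M}_{-}$ is a second delicate point, since in the general ordered-dual setting the semigroup has many generators rather than a single shift; I expect to reduce this to the scalar case by observing that the orthogonality relations above already determine $\varphi$ up to a unimodular constant, which secures uniqueness of the wandering direction.
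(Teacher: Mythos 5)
Your overall architecture is the same as the paper's: apply Suciu's decomposition (Theorem \ref{Suciu}) to the semigroup $\{\chi_\lambda\}_{\lambda\geq 0}$ acting on $\mathcal{M}$, extract a wandering vector $\varphi$, prove $|\varphi|=1$ a.e.\ and the one-dimensionality of the wandering space via the vanishing-Fourier-coefficient computation, and conclude $\mathcal{M}=\varphi H^2(d\sigma)$. But the step you yourself call the heart of the argument contains a genuine gap. From $\mathcal{M}_q\subseteq\mathcal{M}_{-}$ (which does follow from quasi-unitarity, since $\mathcal{M}_q=clos_2\bigl[\bigcup_{\lambda>0}\chi_\lambda\mathcal{M}_q\bigr]\subseteq\mathcal{M}_{-}$) together with $\mathcal{M}_{-}\subsetneq\mathcal{M}$, you may conclude only that the totally non-unitary summand $\mathcal{M}_t$ is nonzero, i.e.\ that the wandering space is nontrivial. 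The inference ``combined with the directness of the decomposition one concludes $\mathcal{M}_q=\{0\}$'' is a non sequitur: an orthogonal direct sum can have both summands nonzero, and nothing available at that stage excludes, say, $\mathcal{M}_q=\chi_E L^2(d\sigma)$ sitting alongside a nonzero $\mathcal{M}_t$. Your proposed bridge $\mathcal{M}_{-}=\mathcal{M}_q\oplus(\mathcal{M}_t)_{-}$, even if established, only restates that $(\mathcal{M}_t)_{-}\subsetneq\mathcal{M}_t$ and leaves $\mathcal{M}_q$ untouched; and your final step (the unitary from $H^2(d\sigma)$ onto $\mathcal{M}$) silently presupposes $\mathcal{M}_q=\{0\}$. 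So as written the proof does not close.

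The repair is a reordering, and it is exactly what the paper does: the quasi-unitary part is eliminated \emph{after}, and by means of, the unimodularity of $\varphi$. Note first that your computations showing $|\varphi|$ constant (hence, after normalization, $|\varphi|=1$ a.e.) and the wandering space one-dimensional are sound and do not depend on whether $\mathcal{M}_q$ vanishes. With $|\varphi|=1$ in hand, take any $f$ in the quasi-unitary summand $\mathcal{L}$. Since $\mathcal{L}$ is invariant under $\{\chi_\lambda\}_{\lambda\geq 0}$ and orthogonal to every $\chi_\lambda\varphi$, one gets $\langle\chi_\delta f,\chi_\lambda\varphi\rangle=0$ for all $\delta,\lambda\geq 0$; as $\delta-\lambda$ ranges over all of $\Gamma$, every Fourier coefficient of $f\bar{\varphi}$ vanishes, so $f\bar{\varphi}=0$ a.e., and since $\varphi$ is nonvanishing a.e.\ this forces $f=0$. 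Thus $\mathcal{L}=\{0\}$ emerges as a consequence of the measure-theoretic structure of $L^2(d\sigma)$ --- a point where the concrete setting does work that the abstract Suciu dichotomy cannot do for you: for a general semigroup of isometries, $\mathcal{M}_{-}\subsetneq\mathcal{M}$ does not by itself force the quasi-unitary part to vanish.
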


\begin{proof}
$\mathcal{M}$ is a Hilbert space being a closed subspace of $L^2(d\sigma)$ and each $\chi_{\lambda}$ in the semigroup $\{ \chi_{\lambda}\}_{\lambda \geq 0}$ is an isometry on $\mathcal{M}$. By Theorem \ref{Suciu}, we can write
\begin{equation}\label{eqn42}
\mathcal{M} = \mathcal{L} \oplus \sum \limits^{}_{\lambda \geq 0} \chi_{\lambda}(\mathcal{N})
\end{equation}
where $\mathcal{N}$ is the orthogonal complement of closure of $[\bigcup \limits^{}_{\lambda > 0} \chi_{\lambda}(\mathcal{M})]$ in $L^2(d\sigma)$ and $\mathcal{L}$ is a quasi unitary subspace of $L^2(d\sigma)$. Clearly $\mathcal{N}$ is non-zero otherwise $\mathcal{M}_{-} = \mathcal{M}$.

Let $\varphi$ be an element in $\mathcal{N}$. We claim that $\varphi$ is non-zero almost everywhere. From equation \ref{eqn42}, we have
$$ \langle \chi_{\delta} \varphi, \chi_{\lambda} \varphi \rangle =\int \limits_{K} \chi_{\lambda - \delta} \varphi \bar{\varphi}d\sigma = 0 \quad \text{for all} \hspace{1mm} \delta, \lambda \in \Gamma_+.$$
This means
\[
\int \limits_{K} \chi_{\gamma} |\varphi|^2 d\sigma = 0 \quad \text{for each non-zero} \hspace{1mm} \gamma \in \Gamma.
\]
and thus $\varphi$ is constant almost everywhere. If we choose $\varphi$ such that $\Vert \varphi \Vert = 1$, then $| \varphi |= 1$ a.e.

Next we assert that $\mathcal{N}$ is one dimensional. To see this assume the existence of a $\psi$ in $\mathcal{N}$ which is orthogonal to $\varphi$. Then we have
\begin{align*}
\langle \chi_{\delta} \varphi, \chi_{\lambda} \psi \rangle & = 0 \quad \text{for all} \hspace{1mm} \delta, \lambda \geq 0.
\end{align*}
which implies
\begin{align*} 
\int \limits_{K} \chi_{\delta - \lambda} \varphi \bar{\psi}d\sigma &= 0
\end{align*}
and thus
\[
\int \limits_{K} \chi_{\gamma} \varphi \bar{\psi}d\sigma = 0, \quad \gamma \in \Gamma
\]
Therefore, every Fourier coefficient of $\varphi \bar{\psi}$ is zero and hence $\varphi \bar{\psi}=0$ a.e., which is possible only when $\psi$ is zero almost everywhere, because $\varphi$ is non-vanishing almost everywhere. So $\mathcal{N}$ is one dimensional and equation (\ref{eqn42}) can be written as
\begin{equation}\label{eqn43}
\mathcal{M} = \mathcal{L} \oplus \sum \limits^{}_{\lambda \geq 0} \chi_{\lambda} \varphi
\end{equation}

Since $\mathcal{L}$ is invariant under $\{ \chi_{\lambda}\}_{\lambda \geq 0}$, for any $f$ in $\mathcal{L}$, we have
\begin{align*}
\langle \chi_{\delta} f, \chi_{\lambda} \varphi \rangle & = 0 \quad \text{for all} \hspace{1mm} \delta, \lambda \geq 0.
\end{align*}
A similar computation which we did above shows that $f=0 \hspace{1mm} a.e.$, which in turn implies $\mathcal{L}$ is zero and equation (\ref{eqn43}) becomes
\begin{equation}\label{eqn44}
\mathcal{M} = \sum \limits^{}_{\lambda \geq 0} \chi_{\lambda} \varphi
\end{equation}

Now multiplication by $\varphi$ is isometry on $L^2(d\sigma)$, so equation \ref{eqn44} takes the form
$$ \mathcal{M} = \varphi H^2(d\sigma)$$
which completes the proof.
\end{proof}

Now we present an analouge of Theorem C in the setting of compact abelian groups with ordered duals.

\begin{theorem}\label{th41}
Let $\mathcal{M}$ be a closed subspace of $L^p(d\sigma)$, $1 \leq p \leq \infty$ and $\mathcal{\tilde{M}} = clos_p \left[ \cup^{}_{\lambda \geq 0} \chi_\lambda. \mathcal{M} \right]$. If $\mathcal{\tilde{M}}_{-} \subsetneq \mathcal{\tilde{M}}$ and for a fixed inner function $I$, $\chi_\lambda.I\mathcal{M} \subseteq \mathcal{M}$,  for each $ \lambda \geq 0$, then 
\[
I \varphi H^p(d\sigma) \subseteq \mathcal{M} \subseteq \varphi H^p(d\sigma)
\]
where $\varphi$ is measurable on $K$ and $|\varphi |=1$ $\sigma$-almost everywhere. When $p=2$, there exists a subspace $W \subseteq H^2(d\sigma)$ such that $\mathcal{M} = \varphi(W \oplus IH^2(d\sigma))$.
\end{theorem}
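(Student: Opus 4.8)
The plan is to follow the same scheme as the proof of Theorem~\ref{unal}, with the closed saturation $\tilde{\mathcal{M}} = clos_p[\bigcup_{\lambda \geq 0}\chi_\lambda \mathcal{M}]$ playing the role that $\mathcal{M}_1$ played there. First I would observe that $\tilde{\mathcal{M}}$ is a closed subspace of $L^p(d\sigma)$ invariant under the semigroup $\{\chi_\lambda\}_{\lambda \geq 0}$: this is immediate from its definition, since $\chi_\mu \chi_\lambda \mathcal{M} = \chi_{\mu+\lambda}\mathcal{M}$ and $\mu + \lambda \geq 0$ whenever $\mu, \lambda \geq 0$. The hypothesis $\tilde{\mathcal{M}}_- \subsetneq \tilde{\mathcal{M}}$ says precisely that $\tilde{\mathcal{M}}$ is simply invariant (not doubly invariant), so Theorem~\ref{suciu} applies to $\tilde{\mathcal{M}}$ and yields a unimodular measurable $\varphi$ with $\tilde{\mathcal{M}} = \varphi H^p(d\sigma)$. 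Since $\mathcal{M} = \chi_0 \mathcal{M} \subseteq \tilde{\mathcal{M}}$, this already delivers the upper inclusion $\mathcal{M} \subseteq \varphi H^p(d\sigma)$.

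For the lower inclusion I would exploit that $|I| = 1$ a.e., so multiplication by $I$ is an isometry of $L^p(d\sigma)$ and in particular commutes with the operation of taking $L^p$-closure. Consequently
\[
I\tilde{\mathcal{M}} = I\,clos_p\Bigl[\bigcup_{\lambda \geq 0}\chi_\lambda \mathcal{M}\Bigr] = clos_p\Bigl[\bigcup_{\lambda \geq 0}\chi_\lambda I\mathcal{M}\Bigr] \subseteq \mathcal{M},
\]
the last containment being forced by the standing hypothesis $\chi_\lambda I \mathcal{M} \subseteq \mathcal{M}$ for all $\lambda \geq 0$ together with the closedness of $\mathcal{M}$. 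Substituting $\tilde{\mathcal{M}} = \varphi H^p(d\sigma)$ gives $I\varphi H^p(d\sigma) \subseteq \mathcal{M}$, completing the sandwich $I\varphi H^p(d\sigma) \subseteq \mathcal{M} \subseteq \varphi H^p(d\sigma)$. For $p = 2$ I would then multiply through by $\bar\varphi$, an isometry of $L^2(d\sigma)$, to obtain $IH^2(d\sigma) \subseteq \bar\varphi \mathcal{M} \subseteq H^2(d\sigma)$; setting $W = \bar\varphi\mathcal{M} \ominus IH^2(d\sigma)$, the orthogonal complement taken inside $\bar\varphi\mathcal{M}$, gives $W \subseteq H^2(d\sigma) \ominus IH^2(d\sigma)$ and $\bar\varphi\mathcal{M} = W \oplus IH^2(d\sigma)$, whence $\mathcal{M} = \varphi(W \oplus IH^2(d\sigma))$.

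The one genuine obstacle is that Theorem~\ref{suciu}, which supplies the representation $\tilde{\mathcal{M}} = \varphi H^p(d\sigma)$, is proved only for $p = 2$, whereas the present statement ranges over $1 \leq p \leq \infty$. To bridge this I expect to need the $L^p$ form of the Helson--Lowdenslager theorem, which can be recovered from the $p = 2$ case by a correspondence between simply invariant subspaces across different exponents in the spirit of Theorem~\ref{gm61}: one intersects $\tilde{\mathcal{M}}$ with $L^2(d\sigma)$ (or $L^\infty(d\sigma)$), applies Theorem~\ref{suciu} to the resulting simply invariant subspace to produce $\varphi$, and then verifies that taking $L^p$-closure recovers $\varphi H^p(d\sigma)$. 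Everything else in the argument --- the invariance of $\tilde{\mathcal{M}}$, the isometry computation for the lower inclusion, and the Hilbert-space splitting at $p = 2$ --- is routine once this representation is in hand.
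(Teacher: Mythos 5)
Your proposal is correct and follows essentially the same route as the paper: the same saturation $\tilde{\mathcal{M}}$, the same isometry computation giving $I\tilde{\mathcal{M}}\subseteq\mathcal{M}\subseteq\tilde{\mathcal{M}}$, and the same $p=2$ splitting via unimodularity of $\varphi$. The one obstacle you flag --- that Theorem~\ref{suciu} covers only $p=2$ --- is resolved in the paper not by a Theorem~\ref{gm61}-style exponent correspondence but simply by citing Helson's Theorem $1'$ \cite[p.~13]{HCAG}, which is precisely the $L^p$ Helson--Lowdenslager representation $\tilde{\mathcal{M}}=\varphi H^p(d\sigma)$ you anticipated needing.
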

\begin{proof}
Since multiplication by $I$ is an isometry on $L^p(d\sigma)$ and  $\mathcal{M} \subseteq \mathcal{\tilde{M}}$, we have
\begin{align*}
I.\mathcal{\tilde{M}} = I. clos_p \left[ \bigcup \limits^{}_{\lambda \geq 0} \chi_\lambda . \mathcal{M} \right] =  clos_p \left[ \bigcup\limits^{}_{\lambda \geq 0} \chi_\lambda . I \mathcal{M} \right]  \subseteq \mathcal{M}
\end{align*}
and thus we have 
\begin{equation}\label{eqn41}
I.\mathcal{\tilde{M}} \subseteq  \mathcal{M} \subseteq \mathcal{\tilde{M}}
\end{equation}
For $\delta > 0$ in $\Gamma$
\begin{align*}
\bigcup\limits^{}_{\delta > 0} \chi_\delta . \mathcal{\tilde{M}} = \bigcup\limits^{}_{\delta > 0} \chi_\delta .clos_p \left[ \bigcup\limits^{}_{\lambda \geq 0} \chi_\lambda . \mathcal{M} \right]
= clos_p \left[ \bigcup\limits^{}_{\lambda > 0} \chi_\lambda . \mathcal{M} \right]
 \subseteq \mathcal{\tilde{M}}
\end{align*}

Now $\mathcal{\tilde{M}}$ is invariant and $\mathcal{\tilde{M}}$ is larger than $\mathcal{\tilde{M}}_{-}$, by Theorem $1^{\prime}$, \cite[p. 13]{HCAG}, $\mathcal{\tilde{M}} = \varphi H^p(d\sigma)$, where $\varphi$ is a $\sigma$-measurable function and $|\varphi|=1$ ~$\sigma$-a.e. Thus equation (\ref{eqn41}) becomes
\[
I. \varphi H^p(d\sigma) \subseteq \mathcal{M} \subseteq \varphi H^p(d\sigma)
\]
When $p=2$, there exists a closed subspace $V$ of $\mathcal{M}$ such that $$\mathcal{M} = V \oplus I \varphi H^2(d\sigma).$$
But $V \subseteq \mathcal{M} \subseteq H^2(d\sigma)$, so $V = \varphi W$, where $W$ is a closed subspace of $H^2$, because $\varphi$ is unitary. Therefore
\begin{equation}
\mathcal{M} = \varphi(W \oplus IH^2(d\sigma)).
\end{equation}
\end{proof}

When $I = \chi_{\lambda_0}$, for some $\lambda_0$ in $\Gamma_{+}$, we obtain the following as a corollary to Theorem \ref{th41}.
\begin{cor}\label{th42}
Let $\mathcal{M}$ be a closed subspace of $L^p(d\sigma)$, $1 \leq p \leq \infty$ and $\mathcal{\tilde{M}} = clos_p \left[ \cup^{}_{\lambda \geq 0} \chi_\lambda. \mathcal{M} \right]$. If $\mathcal{\tilde{M}}_{-} \subsetneq \mathcal{\tilde{M}}$ and for a fixed positive element $\lambda_0$ in $\Gamma$, $\chi_\lambda.\mathcal{M} \subseteq \mathcal{M}$,  for each $ \lambda \geq \lambda_0$, then 
\[
\chi_{\lambda_0}. \varphi H^p(d\sigma) \subseteq \mathcal{M} \subseteq \varphi H^p(d\sigma)
\]
where $\varphi$ is measurable on $K$ and $|\varphi |=1$ $\sigma$-almost everywhere.
\end{cor}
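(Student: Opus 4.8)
The plan is to deduce the corollary directly from Theorem \ref{th41} by specializing the inner function $I$ to the character $\chi_{\lambda_0}$. The first step is to verify that $\chi_{\lambda_0}$ qualifies as an inner function in the sense required by that theorem. Since $\lambda_0$ is a \emph{positive} element of $\Gamma$, i.e. $\lambda_0 \in \Gamma_+$, the character $\chi_{\lambda_0}$ is analytic and hence lies in $H^\infty(d\sigma)$; and since $|\chi_{\lambda_0}(x)| = |x(\lambda_0)| = 1$ for every $x \in K$, it is unimodular. Thus $\chi_{\lambda_0}$ is an inner function, and it is precisely here that positivity of $\lambda_0$ (rather than mere non-triviality) is used.

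Next I would translate the invariance hypothesis of the corollary into the invariance hypothesis of Theorem \ref{th41}. The order on $\Gamma$ is translation-invariant, so for $\lambda \geq 0$ one has $\lambda + \lambda_0 \geq \lambda_0$, and conversely every $\mu \geq \lambda_0$ has the form $\lambda + \lambda_0$ with $\lambda = \mu - \lambda_0 \geq 0$. Consequently the family $\{\chi_\lambda \chi_{\lambda_0} : \lambda \geq 0\}$ coincides with $\{\chi_\mu : \mu \geq \lambda_0\}$, using $\chi_\lambda \chi_{\lambda_0} = \chi_{\lambda + \lambda_0}$. The corollary's hypothesis $\chi_\mu \mathcal{M} \subseteq \mathcal{M}$ for all $\mu \geq \lambda_0$ therefore says exactly that $\chi_\lambda (\chi_{\lambda_0} \mathcal{M}) \subseteq \mathcal{M}$ for all $\lambda \geq 0$, which is the condition $\chi_\lambda I \mathcal{M} \subseteq \mathcal{M}$ of Theorem \ref{th41} with $I = \chi_{\lambda_0}$.

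With both hypotheses matched — the remaining condition $\mathcal{\tilde{M}}_{-} \subsetneq \mathcal{\tilde{M}}$ and the definition of $\mathcal{\tilde{M}}$ are carried over verbatim from Theorem \ref{th41} — that theorem applies and furnishes a $\sigma$-measurable $\varphi$ with $|\varphi| = 1$ a.e. satisfying $\chi_{\lambda_0} \varphi H^p(d\sigma) \subseteq \mathcal{M} \subseteq \varphi H^p(d\sigma)$, which is precisely the asserted conclusion.

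Because the corollary is a clean specialization, I do not anticipate a genuine obstacle. The only points that require care are the verification that $\chi_{\lambda_0}$ is analytic (which rests on $\lambda_0$ being positive) and the elementary bookkeeping, via translation-invariance of the order, that identifies the two indexing families of shifts. Everything else is inherited from Theorem \ref{th41}.
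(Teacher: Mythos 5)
Your proof is correct and takes exactly the paper's route: the paper obtains this corollary simply by setting $I=\chi_{\lambda_0}$ in Theorem \ref{th41}, with no further argument given. Your checks that $\chi_{\lambda_0}$ is inner (analytic since $\lambda_0\in\Gamma_+$, and unimodular) and that translation-invariance of the order identifies $\{\chi_{\lambda}\chi_{\lambda_0}:\lambda\geq 0\}$ with $\{\chi_{\mu}:\mu\geq\lambda_0\}$ merely make explicit the bookkeeping the paper leaves implicit.
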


We observe that Theorem $1.3$, in \cite{DPRS}, becomes a special case of Corollary \ref{th42}, when $p=2$. If we take $\Gamma = \mathbb{Z}$ and $\lambda_0 = 2$, then $\chi_{\lambda_{0}} = z^2$ and $\chi_{\lambda} \mathcal{M} \subseteq \mathcal{M}$, $\forall ~ \lambda \geq \lambda_0$ means invarince under $H^{\infty}_1$.

\begin{cor}\cite[Theorem 1.3]{DPRS}. \label{z2z3}
Let $ \mathcal{M}$ be a norm closed subspace of $L^2$ which is invariant for $H^\infty_1$, but is not invariant for $H^\infty$. Then there exist scalars $\alpha, \beta$ in $\mathbb{C}$ with $|\alpha|^2 + |\beta|^2 = 1$ and $\alpha \neq 0$ and a unimodular function $J$, such that $\mathcal{M}= JH^2_{\alpha \beta}$.
\end{cor}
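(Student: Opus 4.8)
The plan is to obtain the statement from Corollary \ref{th42} (with its $p=2$ refinement coming from Theorem \ref{th41}) under the specialisation $\Gamma=\mathbb{Z}$, $\Gamma_+=\{n\in\mathbb{Z}:n>0\}$, $\chi_n=z^n$ and $\lambda_0=2$, so that $\chi_{\lambda_0}=z^2$. With these identifications, invariance of $\mathcal{M}$ under $H^\infty_1$ says exactly that $\chi_\lambda\mathcal{M}\subseteq\mathcal{M}$ for every $\lambda\geq 2$, which is the standing hypothesis of the corollary. The only point that has to be established before the corollary can be invoked is the simple-invariance condition $\tilde{\mathcal{M}}_-\subsetneq\tilde{\mathcal{M}}$, and this is where the assumption that $\mathcal{M}$ is \emph{not} invariant under $H^\infty$ enters.

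The main obstacle, then, is to verify $\tilde{\mathcal{M}}_-\subsetneq\tilde{\mathcal{M}}$. First I would note that, since $z^k\mathcal{M}\subseteq\mathcal{M}$ for all $k\geq 2$, every term with $\lambda\geq 2$ in the union defining $\tilde{\mathcal{M}}$ already lies in $\mathcal{M}$, so that $\tilde{\mathcal{M}}=clos_2[\mathcal{M}+z\mathcal{M}]$. Because multiplication by $z$ is unitary on $L^2(d\sigma)$, the image $z\tilde{\mathcal{M}}$ is closed and $\tilde{\mathcal{M}}_-=clos_2[z\tilde{\mathcal{M}}]=z\tilde{\mathcal{M}}$; hence the equality $\tilde{\mathcal{M}}_-=\tilde{\mathcal{M}}$ would force $z\tilde{\mathcal{M}}=\tilde{\mathcal{M}}$, and a fortiori $z^2\tilde{\mathcal{M}}=\tilde{\mathcal{M}}$. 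Multiplying $\tilde{\mathcal{M}}=clos_2[\mathcal{M}+z\mathcal{M}]$ through by the unitary $z^2$ would then give
\[
\tilde{\mathcal{M}}=z^2\tilde{\mathcal{M}}=clos_2[z^2\mathcal{M}+z^3\mathcal{M}]\subseteq\mathcal{M}\subseteq\tilde{\mathcal{M}},
\]
because $z^2\mathcal{M}$ and $z^3\mathcal{M}$ are contained in $\mathcal{M}$. This squeezes $\mathcal{M}=\tilde{\mathcal{M}}$ and therefore $z\mathcal{M}=z\tilde{\mathcal{M}}=\mathcal{M}$, i.e. $\mathcal{M}$ would be invariant under $H^\infty$ — contradicting the hypothesis. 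Thus $\tilde{\mathcal{M}}_-\subsetneq\tilde{\mathcal{M}}$, and the pleasant feature of this route is that it sidesteps any appeal to the Wiener description of doubly invariant subspaces.

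With the hypothesis in place, Corollary \ref{th42} together with the $p=2$ conclusion of Theorem \ref{th41} gives $\mathcal{M}=\varphi\bigl(W\oplus z^2H^2(d\sigma)\bigr)$ for a unimodular $\varphi$ and a closed subspace $W\subseteq H^2(d\sigma)\ominus z^2H^2(d\sigma)=[\,1,z\,]$, a two-dimensional space. It then remains to sort $\mathcal{M}$ according to $\dim W$. If $\dim W\in\{0,2\}$ then $\mathcal{M}=\varphi z^2H^2(d\sigma)$ or $\mathcal{M}=\varphi H^2(d\sigma)$, and if $W=[\,z\,]$ then $W\oplus z^2H^2(d\sigma)=zH^2(d\sigma)$ so that $\mathcal{M}=(\varphi z)H^2(d\sigma)$; each of these three is invariant under $H^\infty$ and is ruled out by hypothesis. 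The surviving possibility is $W=[\,\alpha+\beta z\,]$ with $\alpha\neq 0$; after scaling so that $|\alpha|^2+|\beta|^2=1$, I would put $J=\varphi$ and $H^2_{\alpha\beta}=[\,\alpha+\beta z\,]\oplus z^2H^2(d\sigma)$, yielding $\mathcal{M}=JH^2_{\alpha\beta}$. The only routine checks left are that $\alpha\neq 0$ is precisely what keeps $H^2_{\alpha\beta}$ from collapsing to the $H^\infty$-invariant space $zH^2(d\sigma)$, and that this $H^2_{\alpha\beta}$ coincides with the space of the same name in \cite{DPRS}.
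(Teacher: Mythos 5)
Your proposal is correct and takes essentially the same route as the paper, which obtains this corollary precisely by specializing Corollary \ref{th42} and the $p=2$ clause of Theorem \ref{th41} to $\Gamma=\mathbb{Z}$, $\lambda_0=2$, $\chi_{\lambda_0}=z^2$, so that invariance under $z^{\lambda}$ for $\lambda\geq 2$ is invariance under $H^\infty_1$. The paper leaves implicit the two steps you spell out --- deducing $\tilde{\mathcal{M}}_-\subsetneq\tilde{\mathcal{M}}$ from non-invariance under $H^\infty$ (your unitarity-of-$z$ squeeze $\tilde{\mathcal{M}}=z^2\tilde{\mathcal{M}}\subseteq\mathcal{M}$ is sound, with the final step that $z$-invariance of a closed subspace gives $H^\infty$-invariance being exactly Theorem B), and the case analysis on $W\subseteq H^2\ominus z^2H^2$ ruling out $\dim W\in\{0,2\}$ and $W=[\,z\,]$ --- and your verifications of both are correct.
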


\section{Theorem C for the Lebesgue space of the real line}

\label{L2R} Let $L^2(\mathbb{R})$ denote the space of square integrable functions on the real line $\mathbb{R}$. We consider $H^2(\mathbb{R})$ a closed subspace of $L^2(\mathbb{R})$ which consists of functions whose Fourier transform 
\[
F(\lambda) = \int \limits^{\infty}_{-\infty}f(x)e^{-i\lambda x}dx 
\]
is zero almost everywhere for every $\lambda < 0$. A subspace $\mathcal{M}$ of $L^2(\mathbb{R})$ is said to be \emph{invariant} if $e^{i \lambda x} \mathcal{M} \subseteq \mathcal{M}$, for all $\lambda > 0$ and \emph{simply invariant} if $e^{i \lambda x}\mathcal{M} \subsetneq \mathcal{M}$, for $\lambda > 0$. If $e^{i \lambda x}\mathcal{M} = \mathcal{M}$ for all real $\lambda$, then we call $\mathcal{M}$ \emph{doubly invariant}. We say a function $I \in H^2(\mathbb{R})$ is inner if $|I(x)|=1$ almost everywhere.

In this section, we give an extension along the lines of \cite{DPRS} and \cite{mrgh} of the Beurling-Lax theorem, \cite[p. 114]{hoffman} for the Lebesgue space $L^2(\mathbb{R})$ of the real line.

\begin{theorem}
\label{l2R} Let $\mathcal{M}$ be a closed subspace of $L^2(\mathbb{R})$. If $I$ is an inner function and $e^{i\lambda x}I\mathcal{M} \subseteq \mathcal{M}$, for all $\lambda \geq 0$, then either there exists a measurable subset $E$ of $\mathbb{R}$ such that $\mathcal{M}=
\chi_E L^2(\mathbb{R})$ or 
\[
\mathcal{M} = q \left( W \oplus I  H^2(\mathbb{R}) \right) 
\]
where $q$ is measurable function on the real line and $|q(x)|=1$ almost everywhere.
\end{theorem}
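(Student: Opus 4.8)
The plan is to reduce Theorem~\ref{l2R} to the compact abelian group version, Theorem~\ref{th41}, via the standard conformal identification between the line and the circle, and then unwind the dichotomy. The real line situation corresponds to the ordered group $\Gamma=\mathbb{R}$ with $\Gamma_+=(0,\infty)$, the characters being $\chi_\lambda(x)=e^{i\lambda x}$; here $K$ is the Bohr compactification picture or, more concretely, one works directly with $L^2(\mathbb{R})$ and the semigroup $\{e^{i\lambda x}\}_{\lambda\ge 0}$. First I would set $\mathcal{\tilde M}=clos_2\!\left[\bigcup_{\lambda\ge 0}e^{i\lambda x}\mathcal{M}\right]$, exactly as in Theorem~\ref{th41}, and observe that the hypothesis $e^{i\lambda x}I\mathcal{M}\subseteq\mathcal{M}$ for all $\lambda\ge 0$ makes $\mathcal{\tilde M}$ invariant under the semigroup and gives the containment $I\mathcal{\tilde M}\subseteq\mathcal{M}\subseteq\mathcal{\tilde M}$, by the same isometry argument used to derive equation~(\ref{eqn41}).

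The dichotomy then comes from examining $\mathcal{\tilde M}$. The invariant subspace $\mathcal{\tilde M}$ is either doubly invariant (i.e.\ $\mathcal{\tilde M}_-=\mathcal{\tilde M}$) or simply invariant ($\mathcal{\tilde M}_-\subsetneq\mathcal{\tilde M}$). In the doubly invariant case, the Helson--Lowdenslager / Wiener-type analysis on the line shows that $\mathcal{\tilde M}=\chi_E L^2(\mathbb{R})$ for some measurable set $E$; since $I$ is unimodular, multiplication by $I$ preserves $\chi_E L^2(\mathbb{R})$, so the sandwich $I\mathcal{\tilde M}\subseteq\mathcal{M}\subseteq\mathcal{\tilde M}$ collapses to $\mathcal{M}=\chi_E L^2(\mathbb{R})$, giving the first alternative. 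In the simply invariant case, the Beurling--Lax theorem for the line, $\mathcal{\tilde M}=qH^2(\mathbb{R})$ with $|q|=1$ a.e., applies (this is the line analogue of the result quoted in the proof of Theorem~\ref{th41}), and then the containment becomes $IqH^2(\mathbb{R})\subseteq\mathcal{M}\subseteq qH^2(\mathbb{R})$.

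It remains to extract the orthogonal-decomposition form. Writing $\mathcal{M}=q\mathcal{M}'$ with $\mathcal{M}'=\bar q\mathcal{M}$, the sandwich reads $IH^2(\mathbb{R})\subseteq\mathcal{M}'\subseteq H^2(\mathbb{R})$, so $\mathcal{M}'\ominus IH^2(\mathbb{R})$ is a well-defined closed subspace $W\subseteq H^2(\mathbb{R})$ and $\mathcal{M}'=W\oplus IH^2(\mathbb{R})$, whence $\mathcal{M}=q\bigl(W\oplus IH^2(\mathbb{R})\bigr)$. This mirrors exactly the final step of Theorem~\ref{th41} and of Theorem~\ref{unal}, and uses only that multiplication by $q$ is unitary.

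The main obstacle I anticipate is that the line does not literally fit the compact-group framework of Section~\ref{CAG}: $\Gamma=\mathbb{R}$ is not discrete, so the character decomposition and the Fourier-coefficient vanishing argument used to prove Theorem~\ref{suciu} do not transfer verbatim. Consequently the honest work is to secure the two structural inputs directly on $L^2(\mathbb{R})$: the Wiener characterization of doubly invariant subspaces as $\chi_E L^2(\mathbb{R})$, and the Lax--Halmos/Beurling--Lax description of simply invariant subspaces as $qH^2(\mathbb{R})$. These can be imported from the cited Beurling--Lax reference \cite[p.~114]{hoffman} (transported to the line via the Cayley/Fourier correspondence) rather than re-proved, so the genuinely new content is only the bookkeeping that turns the $IH^\infty$-type invariance into the clean sandwich and the final $W\oplus IH^2(\mathbb{R})$ splitting.
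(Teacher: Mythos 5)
Your proposal is correct and follows essentially the same route as the paper: the paper also forms $\mathcal{N}=clos_2\bigl[\bigcup_{\lambda\ge 0}e^{i\lambda x}\mathcal{M}\bigr]$, derives the sandwich $I\mathcal{N}\subseteq\mathcal{M}\subseteq\mathcal{N}$, splits into the doubly and simply invariant cases via the Beurling--Lax theorem on the line (the same citation, \cite[p.~114]{hoffman}), and extracts $W$ from $\mathcal{M}\ominus IqH^2(\mathbb{R})\subseteq q\bigl(H^2(\mathbb{R})\ominus IH^2(\mathbb{R})\bigr)$ exactly as you do. Your closing remark that the compact-group machinery of Section~\ref{CAG} does not transfer verbatim and that the structural inputs must be imported directly on $L^2(\mathbb{R})$ matches the paper's actual strategy, so nothing is missing.
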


\begin{proof}
Consider the subspace
\[
\mathcal{N} = clos_2 \left[ \bigcup\limits^{}_{\lambda \geq 0} e^{i\lambda x} \mathcal{M} \right].
\]
Our  consideration of $\mathcal{N}$ implies that $\mathcal{M}$ is a subspace of $\mathcal{N}$ and $e^{i\lambda x} \mathcal{N} \subseteq \mathcal{N}$, for all $\lambda > 0$. Since multiplication by $I$ is an isometry on $L^2(\mathbb{R})$ and $\mathcal{M}$ is a closed subspace of $L^2(\mathbb{R})$, $e^{i \lambda x} I \mathcal{M}  \subseteq \mathcal{M}$, for $\lambda \geq 0$. So 
\begin{align*}
I\mathcal{N}  =  I clos_2 \left[ \bigcup\limits^{}_{\lambda \geq 0} e^{i \lambda x} \mathcal{M} \right] = clos_2 \left[ \bigcup\limits^{}_{\lambda \geq 0} e^{i \lambda x} I\mathcal{M} \right]  \subseteq \mathcal{M}
\end{align*}
Thus we obtain the inclusion
\begin{equation}\label{inc42}
I\mathcal{N} \subseteq  \mathcal{M} \subseteq \mathcal{N}
\end{equation}

If $e^{i\lambda x}\mathcal{N} = \mathcal{N}$, for some $\lambda$ and hence for all $\lambda$, then by \cite[Theorem, p. 114]{hoffman}, $\mathcal{N} = \chi_E L^2(\mathbb{R})$, for some fixed measurable subset $E$ of the real line. Thus, $I\mathcal{N} = \mathcal{N}$ and by the inclusion in (\ref{inc42}), we have $\mathcal{M} = \chi_E L^2(\mathbb{R})$.

 On the other side, if $e^{i\lambda x}\mathcal{N} \subsetneq \mathcal{N}$, then again by \cite[Theorem, p. 114]{hoffman}, $\mathcal{N} = qH^2(\mathbb{R})$, where $q$ is a measurable function on the real line and $|q(x)|=1$ almost everywhere. So 
\[
I. qH^2(\mathbb{R}) \subseteq \mathcal{M} \subseteq qH^2(\mathbb{R})
\] 
Now we see that 
\begin{align*}
\mathcal{M} \ominus I q H^2(\mathbb{R}) & \subseteq qH^2(\mathbb{R}) \ominus I. qH^2(\mathbb{R}) \\
& = q \left( H^2(\mathbb{R}) \ominus I H^2(\mathbb{R}) \right)
\end{align*}
So there exists a subspace $W \subseteq H^2(\mathbb{R}) \ominus I H^2(\mathbb{R})$ such that 
$$q W = \mathcal{M} \ominus I q H^2(\mathbb{R}) $$
or we can write 
$$ \mathcal{M} = q \left( W \oplus I  H^2(\mathbb{R}) \right)$$
This completes the proof.
\end{proof}

\section{Theorem C in the context of \textit{BMOA}}\label{BMOA}
Let $f \in H^1$, then we say that $f \in BMOA$ if 
\[
\sup\limits^{}_{I} \frac{1}{|I|} \int \limits^{}_{I}|f - f_I|d\theta <
\infty 
\]
where $I$ is a subarc of $\mathbb{T}$ and $f_I= \frac{1}{|I|} \int \limits^{}_{I} f d\theta $.

The space $BMOA$ is a Banach space and the dual of $H^1$. The duality is due to a famous theorem of C. Fefferman which we state below.

\textbf{Fefferman's theorem} (disk version), \cite[p. 261]{fefnst}. \emph{ Each $f \in BMOA$ is a linear functional on $H^1$ and its action is given by}
$$ f(g) = \lim \limits^{}_{r \to 1-}  \int \limits^{}_{\mathbb{T}} f(re^{i\theta})\overline{g(re^{i\theta})}d\theta, \quad for ~all ~g \in H^1.$$

This duality induces the weak-star topology on $BMOA$. The weak star closed subspaces of $BMOA$ invariant under the operator of multiplication by the coordinate function $z$ are well known, see \cite{bns1}, \cite{sahsin2} and \cite{dsus}. It is also easy to see that the appropriate version of Theorem B is valid in this context i.e. the shift invariant subspaces are identical to those that are invariant under multiplication by each element of the algebra of multipliers of $BMOA$ which we call as the multiplier algebra of $BMOA$ and will be denoted by $\mathfrak{M}_{\textsc{bmoa}}$. In a way similar to sections \ref{ua}, \ref{CAG} and \ref{L2R}, we characterize the weak-star closed subspaces of $BMOA$ which are invariant under $B(z)\mathfrak{M}_{\textsc{bmoa}}$, where $B(z)$ is a finite Blaschke factor. The reason why we do not use an arbitrary inner function instead of a finite Blaschke factor $B$ is that the only inner functions that multiply $BMOA$ are finite Blaschke factors. The collection $\mathfrak{M}_{\textsc{bmoa}}$ is well known through the work of Stengenga \cite{dstg}. This enables us to present here the appropriate version of Theorem C in the setting of $BMOA$.

Our proof will make use of the following description of an orthonormal basis for $H^2$ in terms of a finite Blaschke factor $B(z)$ of order $n$:
\begin{theorem}[Singh and Thukral, \cite{dsvt}]\label{h2fct}
Let $\alpha_1,\ldots,\alpha_n\in\mathbb{D}$, and $B(z)=\prod \limits_{i=1}^n \frac{z-\alpha_i}{1-\overline{\alpha_i} z}$ be a Blaschke factor of order $n$. We assume that $\alpha_1=0$.
Define $e_{j,0}=\frac{\sqrt{1-|\alpha_j|^2}}{1-\overline{\alpha_j} z} \prod \limits_{i=1}^j \frac{z-\alpha_i}{1-\overline{\alpha_i} z}$. Then $\{e_{j,0}B(z)^m:m=0,1,2,\ldots\}$ is an orthonormal basis for $H^2$.
\end{theorem}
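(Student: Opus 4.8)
The functions $e_{j,0}$ are the Takenaka--Malmquist--Walsh functions attached to the points $\alpha_1,\dots,\alpha_n$, and the plan is to split the claim into two independent assertions: first, that the powers of the inner function $B$ organize $H^2$ into an orthogonal direct sum of copies of the model space $K_B=H^2\ominus BH^2$; and second, that $\{e_{j,0}\}_{j=1}^n$ is already an orthonormal basis of that single finite dimensional space $K_B$. Granting both, multiplication by $B$ is an isometry of $H^2$ (since $|B|=1$ a.e.), so it carries an orthonormal basis of $K_B$ to an orthonormal basis of $B^mK_B$, and the full collection $\{e_{j,0}B^m\}$ is then an orthonormal basis of $H^2$ block by block.

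For the decomposition I would first record that, because $B$ is inner, multiplication by $B^m$ is an isometry and $B^mH^2\ominus B^{m+1}H^2=B^m(H^2\ominus BH^2)=B^mK_B$; telescoping gives $H^2\ominus B^MH^2=\bigoplus_{m=0}^{M-1}B^mK_B$. Letting $M\to\infty$ and using the standard fact that $\bigcap_{M\ge0}B^MH^2=\{0\}$ for a nonconstant inner function, one obtains $H^2=\bigoplus_{m=0}^{\infty}B^mK_B$ as an orthogonal sum. It then remains to know $\dim K_B=n$: for a finite Blaschke product of order $n$ one has $K_B=\{p(z)/\prod_{i=1}^n(1-\overline{\alpha_i}z):\deg p\le n-1\}$, a space of dimension $n$, so exactly $n$ orthonormal vectors are needed to fill it.

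The base case is the substantive point. I would verify directly that each $e_{j,0}$ lies in $K_B$: it is a bounded rational function with all poles outside the closed disk, hence in $H^2$, and the orthogonality $e_{j,0}\perp BH^2$ reduces, on the circle where $\overline{B}=1/B$, to checking that $\overline{B}e_{j,0}$ has only negative Fourier coefficients, a residue computation in which the $n$ Blaschke factors of $B$ cancel against the partial product appearing in $e_{j,0}$. The normalization $\|e_{j,0}\|=1$ comes from the unimodularity of the Blaschke factors together with the fact that $\sqrt{1-|\alpha_j|^2}/(1-\overline{\alpha_j}z)$ is the normalized Szeg\H{o} kernel of norm one, and the mutual orthogonality $\langle e_{j,0},e_{k,0}\rangle=0$ for $j\ne k$ follows from the same Fourier bookkeeping, the surplus Blaschke factors forcing the integral to vanish; $n$ orthonormal vectors in the $n$-dimensional space $K_B$ then form a basis of it. I expect the main obstacle to be precisely this orthonormality-and-membership computation for the $e_{j,0}$, everything after it being formal. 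An alternative that sidesteps the direct calculation is to repeat the finite tuple $\alpha_1,\dots,\alpha_n$ periodically into an infinite sequence $(\beta_N)$, observe that $\sum_N(1-|\beta_N|)=\infty$ so that the classical Walsh completeness criterion makes the associated Malmquist--Walsh system an orthonormal basis of $H^2$, and then reindex $N=mn+j$ to recognize each basis function as $e_{j,0}B^m$; here the only real work is the bookkeeping that turns a full period of Blaschke factors into one power of $B$.
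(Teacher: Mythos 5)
Your proposal is correct, but there is nothing in the paper to compare it against: the paper states this result with an attribution to Singh and Thukral \cite{dsvt} and gives no proof, the original argument living in the more general setting of multiplication by finite Blaschke factors on de Branges spaces. Your self-contained argument is the standard model-space route, and both halves are sound. The Wold-type decomposition $H^2=\bigoplus_{m\ge 0}B^m K_B$, $K_B=H^2\ominus BH^2$, is justified exactly as you say: multiplication by the inner function $B$ is an isometry, so $B^mH^2\ominus B^{m+1}H^2=B^m K_B$, and $\bigcap_M B^M H^2=\{0\}$ because $f=B^M g_M$ with $\Vert g_M\Vert=\Vert f\Vert$ gives $|f(w)|\le |B(w)|^M \Vert f\Vert (1-|w|^2)^{-1/2}\to 0$ for each $w\in\mathbb{D}$. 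Your base-case computation also goes through: on $\mathbb{T}$ one has $\overline{b_i}=(1-\overline{\alpha_i}z)/(z-\alpha_i)$ for each factor $b_i$, so $\overline{B}\,e_{j,0}$ has only negative Fourier coefficients (giving $e_{j,0}\perp BH^2$), the cross inner products $\langle e_{k,0},e_{j,0}\rangle$ reduce after cancellation to the mean of an $H^1$ function vanishing at the origin, and $\Vert e_{j,0}\Vert=1$ follows from $\int_{\mathbb{T}}|1-\overline{\alpha}z|^{-2}\,dm=(1-|\alpha|^2)^{-1}$; together with $\dim K_B=n$ this fills each block. Your alternative via Walsh's completeness criterion for the periodized sequence $(\beta_N)$ with $\sum_N(1-|\beta_N|)=\infty$ and the reindexing $N=mn+j$ is equally legitimate and shorter, though it outsources completeness to a classical theorem of comparable depth to the direct argument. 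One caution worth flagging: as printed, the paper's formula for $e_{j,0}$ is internally inconsistent (it pairs $\alpha_j$ with the product taken up to $i=j$, while the proof of Theorem 6.2 uses indices $j=0,\dots,n-1$); your implicit reading --- the normalized kernel factor built from $\alpha_{j+1}$ multiplied by the first $j$ Blaschke factors, i.e.\ the standard Takenaka--Malmquist--Walsh normalization --- is the correct one, and under it the hypothesis $\alpha_1=0$ only serves to make $e_{0,0}\equiv 1$ and is not needed for the basis property itself.
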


\begin{theorem}
\label{mulalg} Let $B(z)$ be a finite Blaschke factor and $\mathcal{M}$ be a weak-star closed subspace of $BMOA$ which is invariant under $B(z)\mathfrak{M}_{\textsc{bmoa}}$. Then, there exists a finite dimensional subspace $W$ of $BMOA$ and an inner function $\varphi$ such that 
\[
\mathcal{M}=\varphi\left(W\oplus B(z) BMOA\right)\cap BMOA. 
\]
\end{theorem}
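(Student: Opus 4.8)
The plan is to run the same three-move strategy used in Theorems \ref{unal}, \ref{th41} and \ref{l2R}: first enlarge $\mathcal{M}$ to the smallest shift-invariant object $\mathcal{N}$ that it generates, then pin down $\mathcal{N}$ by the known Beurling-type description of the weak-star closed shift-invariant subspaces of $BMOA$, and finally recover $\mathcal{M}$ from the finite-dimensional ``gap'' that the factor $B(z)$ creates between $\mathcal{N}$ and $B(z)\mathcal{N}$.

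To begin, set $\mathcal{N}=clos_{\infty}[\mathfrak{M}_{\textsc{bmoa}}\mathcal{M}]$, the weak-star closure in $BMOA$ of $\mathfrak{M}_{\textsc{bmoa}}\mathcal{M}$. Since $1\in\mathfrak{M}_{\textsc{bmoa}}$ we have $\mathcal{M}\subseteq\mathcal{N}$, and $\mathcal{N}$ is invariant under $\mathfrak{M}_{\textsc{bmoa}}$, hence shift-invariant by the $BMOA$-analogue of Theorem B recalled in Section \ref{BMOA}. Because $B(z)$ is a multiplier of $BMOA$, the operator $M_{B(z)}$ is weak-star continuous, so that
\[
B(z)\mathcal{N}=M_{B(z)}\,clos_{\infty}[\mathfrak{M}_{\textsc{bmoa}}\mathcal{M}]=clos_{\infty}[B(z)\mathfrak{M}_{\textsc{bmoa}}\mathcal{M}]\subseteq\mathcal{M},
\]
the final inclusion being precisely the hypothesis. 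This yields the sandwich $B(z)\mathcal{N}\subseteq\mathcal{M}\subseteq\mathcal{N}$, the exact analogue of (\ref{eqn41}) and (\ref{inc42}). Assuming $\mathcal{M}\neq\{0\}$ (otherwise there is nothing to prove), $\mathcal{N}$ is a nonzero weak-star closed shift-invariant subspace of $BMOA$; as every element of $BMOA$ is analytic there is no doubly-invariant alternative, so the description in \cite{bns1,sahsin2,dsus} forces $\mathcal{N}=\varphi H^2\cap BMOA$ for some inner function $\varphi$.

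The heart of the argument is then to understand the gap between $\mathcal{N}$ and $B(z)\mathcal{N}$. Here two facts are needed. The first is that \emph{division by an inner function preserves $BMOA$}: if $g\in H^2$ and $\varphi g\in BMOA$ then $g\in BMOA$, which follows from the Garsia form of the $BMO$ norm together with $|\varphi|\le 1$ inside $\mathbb{D}$ and $|\varphi|=1$ on $\mathbb{T}$. Granting this, one checks that $B(z)\mathcal{N}=\varphi B(z)H^2\cap BMOA=\varphi B(z)BMOA\cap BMOA$. The second fact is that the quotient $\mathcal{N}/B(z)\mathcal{N}$ is finite dimensional, of dimension $n=\deg B$: using the Singh--Thukral basis of Theorem \ref{h2fct} one writes $H^2=\bigoplus_{m\ge 0}B(z)^m\mathcal{E}$ with $\mathcal{E}=H^2\ominus B(z)H^2=\mathrm{span}\{e_{1,0},\dots,e_{n,0}\}$, whence the natural map $\mathcal{N}/B(z)\mathcal{N}\to\varphi\mathcal{E}$ is a linear isomorphism (injectivity uses $\mathcal{N}\cap\varphi B(z)H^2=B(z)\mathcal{N}$, surjectivity uses $\varphi\mathcal{E}\subseteq H^\infty\subseteq BMOA$).

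Finally, since $B(z)\mathcal{N}\subseteq\mathcal{M}\subseteq\mathcal{N}$, the image of $\mathcal{M}$ in the $n$-dimensional quotient $\mathcal{N}/B(z)\mathcal{N}\cong\varphi\mathcal{E}$ is a subspace $\varphi W$ with $W\subseteq\mathcal{E}$ finite dimensional, and pulling back gives $\mathcal{M}=\varphi W+B(z)\mathcal{N}$. Substituting $B(z)\mathcal{N}=\varphi B(z)BMOA\cap BMOA$ and observing that $\varphi W\subseteq H^\infty\subseteq BMOA$ already, one rewrites this as $\mathcal{M}=\varphi(W\oplus B(z)BMOA)\cap BMOA$, which is the assertion. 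I expect the principal obstacle to be exactly this last move: in the Hilbert-space case of Theorem C the gap is dispatched by a one-line orthogonal decomposition, whereas in $BMOA$ there is no inner product, so one must argue at the level of the finite-dimensional quotient and verify carefully that $B(z)\mathcal{N}$ is already weak-star closed (so that no closure is lost in $\mathcal{M}=\varphi W+B(z)\mathcal{N}$) and that the division-by-$\varphi$ fact converts $\varphi B(z)H^2\cap BMOA$ into the stated $\varphi B(z)BMOA\cap BMOA$.
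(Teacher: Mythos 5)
Your proposal is correct in substance, but it takes a genuinely different route from the paper's. The paper descends to $H^\infty$: it first shows $\mathcal{M}\cap H^\infty$ is nonzero and weak-star dense in $\mathcal{M}$ (via the Singh--Thukral decomposition $f=e_{00}f_0(B)+\cdots+e_{n-1,0}f_{n-1}(B)$, outer factors $O_m$, dilations $O_m(tz)$, and a reproducing-kernel argument giving $O_mf\to f$ weak-star), then applies Raghupathi's Theorem 3.1 \cite{mrgh} to $\mathcal{M}\cap H^\infty$ inside $H^\infty$, lifts back to $BMOA$ through the Sahni--Singh identity $\overline{IH^\infty}=I\,BMOA\cap BMOA$, and handles the gap by passing to $H^2$-closures and an orthogonal decomposition there. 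You instead ascend: you form the generated invariant subspace $\mathcal{N}=clos_\infty[\mathfrak{M}_{\textsc{bmoa}}\mathcal{M}]$, invoke the pre-existing Beurling theorem for $BMOA$ from \cite{bns1}, \cite{dsus}, \cite{sahsin2} to get $\mathcal{N}=\varphi\, BMOA\cap BMOA$, and treat the gap algebraically via the $n$-dimensional quotient $\mathcal{N}/B\mathcal{N}$. This works, and your quotient step is in fact tighter than the paper's corresponding passage around (\ref{eq56}), which asserts the form $\mathcal{M}=\varphi W\oplus\varphi B\mathcal{N}$ with little justification; your observation that $B\mathcal{N}\subseteq\mathcal{M}$ makes the pullback purely linear-algebraic is the right way to say it, and it also means the weak-star closedness of $B\mathcal{N}$ you worried about is not actually needed. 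Your division-by-inner-functions lemma is correct as sketched: for $f=\varphi g$ the boundary moduli agree, so $P[|g|^2]=P[|f|^2]$ while $|g(z)|\ge|f(z)|$ on $\mathbb{D}$, whence the Garsia norm of $g$ is at most that of $f$. Two caveats. First, weak-star continuity of $M_B$ does not follow merely from $B$ being a bounded multiplier; you need $M_B$ to be an adjoint, which it is here because $M_B=(T_{\bar B})^{\ast}$ with the Toeplitz operator $T_{\bar B}$ bounded on $H^1$ (Stegenga \cite{dstg}, using precisely that $B$ is a finite Blaschke product) --- the same remark is needed for $M_z$ to conclude $\mathcal{N}$ is shift-invariant, so add that line. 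Second, your argument consumes the $BMOA$-Beurling theorem as an input, whereas the paper's proof independently reproves it (it is the $B\equiv 1$ corollary of the theorem): citing \cite{bns1} and \cite{dsus} is legitimate since those are prior results, but your route could not then be used to rederive those corollaries without circularity, which is part of what the paper's longer $H^\infty$-based argument buys. (Both proofs tacitly exclude $\mathcal{M}=\{0\}$, for which the stated form is unattainable since the right-hand side always contains $\varphi B$; the paper's convention that invariant subspaces are nontrivial covers this.)
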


\begin{proof}
First, we shall show that $\mathcal{M}$ has non-empty intersection with $H^\infty$. 
Using the fact that $\{e_{j,0}B(z)^m:m=0,1,2,\ldots\}$ is an orthonormal basis, in Theorem \ref{h2fct}, any $f\in\mathcal{M}$ can be written as
\begin{equation}\label{eq53}
f(z)=e_{00}f_0(B(z)) + \cdots + e_{n-1,0} f_{n-1}(B(z)),
\end{equation} 
for some $f_0(z),\ldots,f_{n-1}(z)$ in $H^2$. For $i= 0, 1, \ldots, n-1$, we define functions
$$g^{(i)}(z)=\exp \left(-|f_i(z)|-i |f_i(z)|^\sim \right),$$
where $\sim$ stands for the harmonic conjugate. Consider the function
$$h(z) = g^{(0)}(B(z)) \ldots g^{(n-1)}(B(z)).$$
It is easy to see that $B(z)h(z)f(z) \in H^\infty$. Define $h_t(z)=h(tz)$ for $t \in(0,1)$. For each such fixed $t$, $h_t(z)$ is a multiplier of $BMOA$ (see \cite[Corollary 2.8]{dstg}), so $h_t(z)f(z) \in BMOA$. By Lemma $2$, in \cite{bns2}, $h_t(z)f(z) \in [f]$, where $[f]= \text{wek-star closure of } span \{fp\}$, for all polynomials $p$.  Since $\mathcal{M}$ is weak-star closed in $BMOA$ and invariant under $B(z)\mathfrak{M}_{\textsc{bmoa}}$, so $h_t(z)f(z) \in \mathcal{M}$. Again following the arguments of Lemma $2$, in \cite{bns2}, $h_t(z)f(z)$ converges weak-star to $h(z)f(z)$, so $h(z)f(z)$ also belongs to $\mathcal{M}$, and hence $B(z)h(z)f(z)$ belongs to $\mathcal{M}$. This establishes the claim that $\mathcal{M} \cap H^\infty$ is non-empty.

The space $\mathcal{M} \cap H^\infty$ is a weak-star closed subspace of $H^\infty$ and is invariant under the algebra $BH^\infty$, so by Theorem $3.1$ in \cite{mrgh}, there exists an inner function $\varphi$ such that
\begin{equation}\label{eq51}
\varphi B(z)H^{\infty} \subseteq \mathcal{M} \cap H^{\infty} \subseteq \varphi H^{\infty}.
\end{equation}
It has been established in Theorem $4.1$, in \cite{sahsin2} that $\overline{I H^\infty}=I BMOA\cap BMOA$ for any inner function $I$. Therefore,
\begin{equation}\label{eq52}
\varphi B(z)BMOA\cap BMOA \subseteq \overline{\mathcal{M} \cap H^{\infty}} \subseteq \varphi BMOA\cap BMOA.
\end{equation}
The bar in (\ref{eq52}) denotes weak-star closure in $BMOA$.

We claim that $\overline{\mathcal{M} \cap H^{\infty}}=\mathcal{M}$. Consider the decomposition \ref{eq53} for any $f\in\mathcal{M}$. For each $i= 0, 1, \ldots, n-1$, define a sequence of$H^{\infty}$ functions
$$g^{(i)}_m(z)=\exp \left(\frac{-|f_i(z)|-i |f_i(z)|^\sim}{m} \right).$$
Define
$$O_m(z) = g^{(0)}_m(B(z)) \ldots g^{(n-1)}_m(B(z)).$$
It can be seen that $O_m(z)f(z)\in H^\infty$, and $O_m(z)\to 1~a.e.$.

 As seen above, for each fixed $m$, $O_m(tz)f(z)\in \mathcal{M}$. Also, $O_m(tz)f(z)$ converges weak-star to $O_m(z)f(z)$ in $BMOA$, so $O_m(z)f(z) \in \mathcal{M}$ and hence in $\mathcal{M} \cap H^\infty$. 

Now, $O_m(z)f(z)\to f(z)~a.e.$ and $\Vert O_m(z)f(z) \Vert_{\infty} \leq K$, for some constant $K$. By Dominated Convergence Theorem, for every $\epsilon > 0$,
\[
\int^{}_{\mathbb{T}} |O_m(z)f(z) - f(z)| < \epsilon \text{ for sufficiently large }m.
\]
This means that for each polynomial $p\in H^1$ with upper bound $M_p$, we can find sufficiently large $m, n$ such that
\[
\int^{}_{\mathbb{T}} |O_m(z)f(z) - O_n(z)f(z)| < \frac{\epsilon}{|M_p|}.
\]
So
\begin{align*}
\int^{}_{\mathbb{T}} |O_m(z)f(z)\overline{p(z)} - O_n(z)f(z)\overline{p(z)}| &= \int^{}_{\mathbb{T}} |O_m(z)f(z) - O_n(z)f(z)||\overline{p(z)}|\\
 &< \frac{\epsilon}{|M_p|}.|M_p|=\epsilon.
\end{align*}
Thus $\left( O_m f \right)(p)$ is a Cauchy sequence for each polynomial $p$ in $H^1$. Moreover, $ \Vert O_m(z)f(z) \Vert_{BMOA} \leq K$. By Ex. $13$, in \cite[p. 76]{RFA}, $\{O_m(z)f(z) \}$ converges weak-star to some $h(z)$ in $BMOA$.

We claim that $h(z)$ coincides with $f(z)$. Note that $(O_mf )(k)$ converges weak-star to $h(k)$, for each $k$ in $H^1$. So $(O_mf )(k_{z_0})$ converges weak-star to $h(k_{z_0})$, where  $k_{z_0}= \frac{1}{1-\bar{z_0}z}$ is the reproducing kernel in $H^1$, and $z_0$ is an arbitrary but fixed element of $\mathbb{D}$. Therefore, $O_m(z_0)f(z_0)=(O_mf )(k_{z_0})$ converges weak-star to $h(z_0)=h(k_{z_0})$. Since $z_0$ was arbitrarily chosen, so $O_m(z)f(z)$ converges weak-star to $h(z)$, for each $z \in \mathbb{D}$. But $O_m(z)f(z)$ converges to $f(z)$ a.e., so $h = f$ a.e.

This proves that $O_m f$ converges to $f$ weak-star in $BMOA$, and hence $\overline{\mathcal{M} \cap H^\infty}=\mathcal{M}$. 
The inequality (\ref{eq52}) now reads
\begin{equation}\label{eq54}
\varphi B(z) BMOA \cap BMOA \subset \mathcal{M} \subset\varphi BMOA \cap BMOA.
\end{equation}
Let $\overline{\overline{\mathcal{M}}}$ stand for the closure of $\mathcal{M}$ in $H^2$. Taking closure in $H^2$ throughout (\ref{eq54}) we get
\begin{equation}\label{eq55}
\varphi B(z) H^2 \subset \overline{\overline{\mathcal{M}}} \subset\varphi H^2.
\end{equation}
From (\ref{eq55}), we see that $\overline{\overline{\mathcal{M}}}\ominus\varphi B(z) H^2\subset \varphi (H^2\ominus B(z) H^2)$.
So, there exists a subspace $W_1$ of $H^2\ominus B(z) H^2$ such that $\overline{\overline{\mathcal{M}}}\ominus\varphi B(z) H^2=\varphi W_1$. Moreover, $\dim W_1\le n$. Therefore
$$\overline{\overline{\mathcal{M}}}=\varphi W_1\oplus \varphi B(z) H^2.$$
Since $\mathcal{M} \subset \overline{\overline{M}}$, we have the following form for $\mathcal{M}$:
\begin{equation}\label{eq56}
\mathcal{M} = \varphi W\oplus \varphi B(z) \mathcal{N};
\end{equation}
where $W$ is a subspace of $W_1$, and $\mathcal{N}$ is a subspace of $H^2$.

Now $H^2\ominus B(z)H^2 = \{e_{0,0},e_{1,0},\ldots, e_{n-1,0}\}\subset H^\infty$ and consequently $W\subset H^\infty$. Thus in equation (\ref{eq56}) we have $W\subset BMOA$, and because $\varphi B(z)$ is inner, we also have $\mathcal{N} \subset BMOA$.

In light of (\ref{eq54}) we see that $\varphi B(z) BMOA \cap BMOA \subset \varphi B(z) \mathcal{N}$. But $\mathcal{N} \subset BMOA$. So $\varphi B(z) \mathcal{N} = \varphi B(z) BMOA \cap BMOA$. This completes the proof of the theorem.
\end{proof}
If we take $B(z)=1$, then invariance under $\mathfrak{M}_{\textsc{bmoa}}$ is equivalent to invariance under the operator $S$ of multiplication by coordinate function $z$ on $BMOA$, and the results in \cite[Theorem 3.1]{bns1}, \cite[Theorem 4.3]{sahsin2} and \cite[Theorem C]{dsus} can be derived as corollaries of the above theorem.

\begin{cor}
Let $\mathcal{M}$ be a weak star closed subspace of $BMOA$ invariant under $S$. Then there exists a unique inner function $\varphi$ such that $\mathcal{M}= \varphi BMOA \cap BMOA$.
\end{cor}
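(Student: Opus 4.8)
The plan is to obtain this corollary directly from Theorem \ref{mulalg} by specializing to the trivial Blaschke factor $B(z)=1$. First I would recall, as observed at the close of Section \ref{BMOA}, that invariance of $\mathcal{M}$ under the shift $S$ is equivalent to invariance under the full multiplier algebra $\mathfrak{M}_{\textsc{bmoa}}$; this is precisely the $BMOA$-analogue of Theorem B noted there. Since $B(z)\mathfrak{M}_{\textsc{bmoa}}=\mathfrak{M}_{\textsc{bmoa}}$ when $B(z)=1$, the hypothesis that $\mathcal{M}$ is $S$-invariant is exactly the hypothesis of Theorem \ref{mulalg} for this choice of $B$, so the theorem applies without further work.

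Next I would simply read off the conclusion of Theorem \ref{mulalg} in this degenerate case. It produces a finite dimensional subspace $W\subseteq H^2\ominus B(z)H^2$ and an inner function $\varphi$ with $\mathcal{M}=\varphi\left(W\oplus B(z)BMOA\right)\cap BMOA$. The one computation that matters is that $B(z)=1$ forces $H^2\ominus B(z)H^2=H^2\ominus H^2=\{0\}$, so $W=\{0\}$, while $B(z)BMOA=BMOA$. Hence the formula collapses to $\mathcal{M}=\varphi BMOA\cap BMOA$, which gives the existence of $\varphi$.

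For uniqueness, I would argue in the classical Beurling style. Suppose $\varphi_1 BMOA\cap BMOA=\varphi_2 BMOA\cap BMOA$ with $\varphi_1,\varphi_2$ inner. Since $\varphi_1=\varphi_1\cdot 1$ lies in the common space, I can write $\varphi_1=\varphi_2 g$ with $g\in BMOA$, and by symmetry $\varphi_2=\varphi_1 h$ with $h\in BMOA$. Then $g=\varphi_1\overline{\varphi_2}$ has modulus one almost everywhere, and both $g$ and $\overline{g}=h$ belong to $BMOA\subseteq H^1$, so they are both analytic. Comparing Fourier coefficients forces $g$ to be constant, and unimodularity makes it a constant of modulus one; thus $\varphi_1$ and $\varphi_2$ coincide up to such a constant, which is the intended uniqueness.

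The main point to watch is therefore not analytic but structural: one must confirm that the decomposition furnished by Theorem \ref{mulalg} genuinely degenerates when $B(z)=1$, i.e. that the finite dimensional piece $W$ vanishes, after which existence is pure bookkeeping. The only place requiring an independent (though standard) argument is the uniqueness of $\varphi$, and even there the obstacle is minor because the two-sided membership argument above reduces it to the elementary fact that a function analytic together with its conjugate must be constant.
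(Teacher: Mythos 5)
Your proposal is correct and follows essentially the same route as the paper, which derives this corollary precisely by setting $B(z)=1$ in Theorem \ref{mulalg} (so that $W\subseteq H^2\ominus H^2=\{0\}$ and $B(z)\,BMOA=BMOA$), after invoking the stated equivalence of $S$-invariance with invariance under $\mathfrak{M}_{\textsc{bmoa}}$. The only addition is your explicit uniqueness argument via $g=\varphi_1\overline{\varphi_2}$ and $\overline{g}\in BMOA\subseteq H^1$, which the paper leaves implicit; it is the standard argument and correctly yields uniqueness up to a unimodular constant, the intended reading of the corollary.
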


Replacing $B(z)$ with $z$, we obtain common invariant subspaces of $S^2$ and $S^3$ and Theorem $3.1$ in \cite{sahsin2} is received as corollary of Theorem \ref{mulalg}.

\begin{cor}
Let $\mathcal{M}$ be weak star closed subspace of $BMOA$ which is invariant under $S^2$ and $S^3$ but not invariant under $S$. Then there exists an inner function $I$ and constants $\alpha, \beta$ such that 
\[
\mathcal{M} = I BMOA_{\alpha, \beta} \cap BMOA. 
\]
\end{cor}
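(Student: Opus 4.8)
The plan is to reduce the $BMOA$ statement to the already-established $H^\infty$ result of Raghupathi (Theorem C, \cite{mrgh}) by passing back and forth between $\mathcal{M}$ and its intersection $\mathcal{M}\cap H^\infty$. The first task is to show that $\mathcal{M}\cap H^\infty$ is genuinely rich, not just $\{0\}$. Here I would exploit the orthonormal basis of Theorem \ref{h2fct}: any $f\in\mathcal{M}$ decomposes as $f(z)=\sum_{i=0}^{n-1} e_{i,0}f_i(B(z))$ with $f_i\in H^2$. The idea is to build an explicit outer-type multiplier that tames the growth of each $f_i$, namely $g^{(i)}(z)=\exp(-|f_i(z)|-i|f_i(z)|^\sim)$, so that $B(z)\,g^{(0)}(B(z))\cdots g^{(n-1)}(B(z))\,f(z)$ lands in $H^\infty$. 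To legitimately multiply inside $BMOA$ I would first dilate, replacing $h(z)$ by $h_t(z)=h(tz)$, which is a bona fide $BMOA$-multiplier by \cite[Corollary 2.8]{dstg}; invariance under $B(z)\mathfrak{M}_{\textsc{bmoa}}$ together with the weak-star closedness of $\mathcal{M}$ then keeps the product in $\mathcal{M}$, and letting $t\to 1^-$ via the weak-star convergence argument of \cite[Lemma 2]{bns2} produces a nonzero element of $\mathcal{M}\cap H^\infty$.

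Once $\mathcal{M}\cap H^\infty$ is nonempty I would observe that it is a weak-star closed, $B(z)H^\infty$-invariant subspace of $H^\infty$, so Raghupathi's Theorem C applies directly and yields an inner $\varphi$ with
\[
\varphi B(z)H^\infty\subseteq \mathcal{M}\cap H^\infty\subseteq\varphi H^\infty.
\]
The next step is to lift this sandwich from $H^\infty$ to $BMOA$. For this I would invoke the identity $\overline{IH^\infty}=I\,BMOA\cap BMOA$ from \cite[Theorem 4.1]{sahsin2}, valid for any inner $I$, where the bar is weak-star closure. Applying it to both $I=\varphi B(z)$ and $I=\varphi$ converts the $H^\infty$ inclusions into
\[
\varphi B(z)BMOA\cap BMOA\subseteq\overline{\mathcal{M}\cap H^\infty}\subseteq\varphi BMOA\cap BMOA.
\]

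The crux of the argument, and the step I expect to be the main obstacle, is the density claim $\overline{\mathcal{M}\cap H^\infty}=\mathcal{M}$, since without it the sandwich above controls only the wrong space. My plan is to approximate an arbitrary $f\in\mathcal{M}$ by $H^\infty$-functions $O_m(z)f(z)$, where $O_m$ is built from the damped multipliers $g^{(i)}_m(z)=\exp((-|f_i|-i|f_i|^\sim)/m)$ so that $O_m\to 1$ a.e.\ and $\|O_m f\|_\infty$ stays bounded. The delicate point is that a.e.\ convergence does not by itself give weak-star convergence in $BMOA$; I would instead verify the Cauchy condition for the functionals $(O_m f)(p)$ on polynomials $p\in H^1$ using the Dominated Convergence Theorem and the uniform bound $\|O_m f\|_{BMOA}\le K$, invoking \cite[Ex.\ 13, p.\ 76]{RFA} to extract a weak-star limit $h$. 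Identifying $h$ with $f$ is the subtle part: I would test against the $H^1$ reproducing kernels $k_{z_0}=1/(1-\overline{z_0}z)$ to recover pointwise values $O_m(z_0)f(z_0)\to h(z_0)$ on $\mathbb{D}$, and match these with the a.e.\ boundary limit $f$ to conclude $h=f$.

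Having secured $\overline{\mathcal{M}\cap H^\infty}=\mathcal{M}$, the inclusion reads $\varphi B(z)BMOA\cap BMOA\subset\mathcal{M}\subset\varphi BMOA\cap BMOA$, and the remaining work is to extract the finite-dimensional complement $W$. Here I would pass to the $H^2$-closure $\overline{\overline{\mathcal{M}}}$, where the analogous sandwich $\varphi B(z)H^2\subset\overline{\overline{\mathcal{M}}}\subset\varphi H^2$ gives a subspace $W_1\subseteq H^2\ominus B(z)H^2$ with $\overline{\overline{\mathcal{M}}}=\varphi W_1\oplus\varphi B(z)H^2$ and $\dim W_1\le n$. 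Because $H^2\ominus B(z)H^2=\operatorname{span}\{e_{0,0},\dots,e_{n-1,0}\}\subset H^\infty\subset BMOA$, the relevant piece $W$ sits inside $BMOA$, and since $\varphi B(z)$ is inner the tail $\mathcal{N}$ also lies in $BMOA$; combining this with the $BMOA$-sandwich forces $\varphi B(z)\mathcal{N}=\varphi B(z)BMOA\cap BMOA$, yielding the desired form $\mathcal{M}=\varphi(W\oplus B(z)BMOA)\cap BMOA$.
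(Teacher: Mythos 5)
Your proposal is, in essence, a faithful reconstruction of the paper's proof of Theorem \ref{mulalg} (the general theorem for invariance under $B(z)\mathfrak{M}_{\textsc{bmoa}}$): the basis decomposition $f=\sum e_{i,0}f_i(B(z))$, the damping factors $g^{(i)}$, the dilation $h_t$ via \cite[Corollary 2.8]{dstg} and \cite[Lemma 2]{bns2}, Raghupathi's sandwich $\varphi B H^\infty\subseteq\mathcal{M}\cap H^\infty\subseteq\varphi H^\infty$, the lift through $\overline{IH^\infty}=I\,BMOA\cap BMOA$ from \cite[Theorem 4.1]{sahsin2}, the density claim $\overline{\mathcal{M}\cap H^\infty}=\mathcal{M}$ via the $O_m$ and reproducing kernels, and the extraction of the finite-dimensional $W$. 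All of that is sound and matches the paper. The problem is that this is a proof of the wrong statement: the corollary you were asked to prove concerns a subspace invariant under the two operators $S^2$ and $S^3$ but \emph{not} under $S$, and asserts the specific form $\mathcal{M}=I\,BMOA_{\alpha,\beta}\cap BMOA$; your argument terminates at the generic conclusion $\mathcal{M}=\varphi(W\oplus B(z)BMOA)\cap BMOA$ without ever specifying $B$, without ever using the hypothesis of non-invariance under $S$, and without producing $\alpha$, $\beta$, or the space $BMOA_{\alpha,\beta}$. The paper's own proof of the corollary is precisely this missing specialization step (it is a one-line deduction from Theorem \ref{mulalg}, taking the order-two Blaschke factor --- the printed ``$B(z)=z$'' is evidently a slip for $z^2$, since only then is $W$ a subspace of $\mathrm{span}\{1,z\}=H^2\ominus z^2H^2$).

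Concretely, two things are missing. First, the hypothesis must be translated: invariance under $S^2$ and $S^3$ is not literally invariance under $B(z)\mathfrak{M}_{\textsc{bmoa}}$ for any $B$; you need to take $B(z)=z^2$ and observe that every monomial $z^k$ with $k\ge 2$ is a product of copies of $z^2$ and $z^3$, so $\mathcal{M}$ is invariant under $z^2p$ for all polynomials $p$, and then the same dilation/weak-star limiting device you already use upgrades this to invariance under $z^2\mathfrak{M}_{\textsc{bmoa}}$, putting you in the setting of Theorem \ref{mulalg}. Second, the hypothesis that $\mathcal{M}$ is not $S$-invariant --- which your proposal never touches --- is exactly what pins down $W$: Theorem \ref{mulalg} with $B(z)=z^2$ gives $W\subseteq\mathrm{span}\{1,z\}$, so $\dim W\in\{0,1,2\}$; if $\dim W=2$ then $\mathcal{M}=\varphi\,BMOA\cap BMOA$, and if $\dim W=0$ then $\mathcal{M}=\varphi z^2BMOA\cap BMOA$, both of which are invariant under $S$ (since $z$ multiplies $BMOA$), contradicting the hypothesis. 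Hence $\dim W=1$, say $W=\mathrm{span}\{\alpha+\beta z\}$ with $|\alpha|^2+|\beta|^2=1$, and $\varphi\bigl(W\oplus z^2BMOA\bigr)\cap BMOA$ is exactly $I\,BMOA_{\alpha,\beta}\cap BMOA$ with $I=\varphi$. Without these two steps your argument, however correct as a proof of the general theorem, does not establish the corollary.
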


\begin{proof}
The result follows by taking $B(z)= z$ and $W$ as subspace of $span\{ 1, z \}$.
\end{proof}

\end{document}